\documentclass{amsart}
\usepackage{amssymb, amsmath}
\newtheorem{theorem}{Theorem}
\newtheorem{definition}[theorem]{Definition}
\newtheorem{lemma}[theorem]{Lemma}
\newtheorem{corollary}[theorem]{Corollary}
\newtheorem{proposition}[theorem]{Proposition}
\newtheorem{remark}[theorem]{Remark}

\newcommand{\I}{\mbox{Id}}

\begin{document}
\title{Graded polynomial identities and exponential growth}
\author{Eli Aljadeff}
\address{Department of Mathematics, Technion-Israel Institute of
Technology, Haifa 32000, Israel} \email{aljadeff@tx.technion.ac.il}
\author{Antonio Giambruno}
\address{Dipartimento di Matematica ed Applicazioni,
Universit\`a di Palermo, Via Archirafi 34, 90123 Palermo, Italy}
\email{agiambr@unipa.it}
\author{Daniela La Mattina}
\address{Dipartimento di Matematica ed Applicazioni,
Universit\`a di Palermo, Via Archirafi 34, 90123 Palermo, Italy}
\email{daniela@math.unipa.it}

\keywords{graded algebra, polynomial identity, growth, codimensions}

\thanks{The first author was supported by the ISRAEL SCIENCE FOUNDATION
(grant No. 1283/08) and by the E. SCHAVER RESEARCH FUND. The second
and third authors were partially supported by MIUR of Italy}

\subjclass[2000]{16R10, 16W50, 16P90}

\begin{abstract} Let $A$ be a finite dimensional algebra over a field
of characteristic zero graded by a finite abelian group $G$. Here we
study a growth function related to the graded polynomial identities
satisfied by $A$ by computing the exponential rate of growth of the
sequence of graded codimensions of $A$. We prove that the
$G$-exponent of $A$ exists and is an integer related in an explicit
way to the dimension of a suitable semisimple subalgebra of $A$.
\end{abstract}

\maketitle

\section{Introduction}
Let $A$ be an associative algebra over a field $F$ of characteristic
zero graded by a finite abelian group $G.$ Graded polynomials and
graded polynomial identities are defined in a natural way and there
is a well established literature on the subject (see for instance
\cite{AK}, \cite{AHN}, \cite{BD}, \cite{BZ}, \cite{BJ},
\cite{GZbook}). Here we want to study a growth function related to
the graded identities of $A$ and compare it with the ordinary case.
More precisely let $F\langle X,G \rangle$ be the free $G$-graded
algebra on a countable set and $\I^G(A)$ the ideal of $G$-graded
identities of $A.$ Then one considers the relatively free $G$-graded
algebra $F\langle X,G \rangle/\I^G(A)$ and denotes by $c^G_n(A)$ the
dimension of the subspace of multilinear elements in $n$ free
generators. The sequence $c^G_n(A),$ $n=1, 2,\ldots,$ is called the
sequence of $G$-codimensions of $A$ and is bounded from above by
$|G|^nn!$ Nevertheless when $A$ satisfies an ordinary (non graded)
polynomial identity (in this case we say that $A$ is a PI-algebra),
one can compare $c^G_n(A)$ and $c_n(A),$ $n=1, 2, \ldots,$ the
ordinary codimension sequence of $A$ and it turns out that
$c^G_n(A)\leq |G|^n c_n(A)$ holds for all $n$ (\cite{gia-reg}).

Since in the presence of an ordinary identity the sequence of
codimensions is exponentially bounded (\cite{reg1}), it turns out
that also $c^G_n(A)$ is exponentially bounded and our aim in this
paper is to determine the exponential rate of growth of this
sequence. Now, for the ordinary codimensions, in case $A$ is a
PI-algebra, in \cite{gz1} and \cite{gz2} it was proved that
$\lim_{n\to \infty}\sqrt[n]{c_n(A)}=\exp(A)$ exists and is an
integer called the PI-exponent of $A.$ Here we shall prove that if
$A$ is a finite dimensional $G$-graded algebra,
$$
\exp^G(A)=\lim_{n\to
\infty}\sqrt[n]{c^G_n(A)}
$$
exists and is an integer. Moreover $\exp^G(A)$ can be explicitly
computed and it turns out to be equal to the dimension of a suitable
semisimple $G$-graded subalgebra of $A$ over an algebraically closed
field. In case $G$ is of order two, this result was proved in
\cite{benanti-giamb-pipitone}.

For non associative algebras satisfying a polynomial identity, the
corresponding codimensions in general are not exponentially bounded.
For Lie algebras an interesting scale of functions between
exponential and factorial have been provided by Petrogradsky in
\cite{P}. Even when the codimensions are exponentially bounded, the
exponential rate of growth can be non integer. In \cite{GMZ2} the
authors constructed for any real number $\alpha>1$ an algebra whose
exponential growth of the codimensions is equal to $\alpha$. For Lie
algebras in \cite{Z} it was shown that the PI-exponent of a finite
dimensional Lie algebra exists and is an integer, but in \cite{ZM}
an example was given of a Lie algebra whose PI-exponent is not an
integer.

The proof of the main result of the paper heavily relies on the
representation theory of the symmetric group and on the computation
of the asymptotics for the degrees of the irreducible
$S_n$-representations, $n\ge 1$, associated to Young diagrams of
bounded height. It is worth to point out that along the proof we
construct a family of multialternating $G$-graded polynomials for a
$G$-simple algebra which is of independent interest.

\section{Preliminaries}

Throughout the paper $F$ will denote a field of characteristic zero,
$G$ a finite abelian group and $A$  a $G$-graded algebra over $F$.
If $G=\{g_1=1, \ldots, g_s\}$, we write $A=\bigoplus_{i=1}^s
A_{g_i}$, where $A_{g_i}A_{g_j}\subseteq A_{g_ig_j}$, for all $i,
j=1, \ldots, s$, and the $A_{g_i}$'s are the homogeneous components
of $A$.

Let  $F\langle X, G\rangle$ be the free associative $G$-graded
algebra on a countable set $X$ over $F$. Write $X$ as
$$
X=\bigcup_{i=1}^s X_{g_i},
$$
where  the sets $X_{g_i}=\{x_{1,g_i}, x_{2,g_i}, \ldots \}$ are
disjoint, and the elements of $X_{g_i}$ have homogeneous degree
$g_i.$ In general, given a  monomial $x_{i_1,g_{j_1}}\cdots
x_{i_t,g_{j_t}}$ its homogeneous degree is $g_{j_1}\cdots g_{j_t}.$
If we denote by ${\mathcal F}_{g_i}$ the subspace of $F\langle X, G
\rangle$ spanned by all monomials in the variables of $X$ having
homogeneous degree $g_i,$ then $F\langle X,G \rangle=\bigoplus_i
{\mathcal F}_{g_i}$ is the natural $G$-grading of $F\langle X,G
\rangle.$ In fact any graded map $X\to B$, where $B$ is any
$G$-graded algebra, can be extended to a homomorphism of graded
algebras $F\langle X, G\rangle \to B$.

Recall that an element $f=f(x_{1,g_1}, \ldots, x_{t_1,g_1}, \ldots,
x_{1,g_s}, \ldots, x_{t_s,g_s})$ of $F\langle X, G \rangle$ is
called a graded polynomial, and we write $f\equiv 0$ in case $f$
vanishes under all graded substitutions in $A$. Thus $f\equiv 0$ if
$f(a_{1,g_1}, \ldots, a_{t_1,g_1}, \ldots, a_{1,g_s}, \ldots,
a_{t_s,g_s})=0$, for all $a_{1,g_i}, \ldots, a_{t_i,g_i} \in
A_{g_i},$ $i=1,\ldots, s$.

We denote by $\I^G(A)=\{f\in F\langle X, G \rangle \mid \ f\equiv 0
\ \mbox{on}\ A\}$ the ideal of graded identities of $A.$ It is
easily checked that $\I^G(A)$ is invariant under all graded
endomorphisms of $F\langle X, G \rangle$. The object of our study
will be $F\langle X, G \rangle/\I^G(A)$, the free algebra of the
variety of $G$-graded algebras generated by $A$. It is well known
that in characteristic zero such algebra is determined by its
multilinear components. Hence for $n\geq 1$ we denote by
$$
P_n^G=\mbox{span}_F\{x_{\sigma(1),g_{i_1}}\cdots x_{\sigma(n),
g_{i_n}}\ | \ \sigma \in S_n, g_{i_1}, \ldots, g_{i_n}\in G \}
$$
the space of multilinear $G$-graded  polynomials in the variables
$x_{1,g_{i_1}}, \ldots, x_{n,g_{i_n}}$, where $g_{i_j}\in G$ and
accordingly we set
$$
P_n^G(A)= \frac{P^G_n}{P^G_n\cap \I^G(A)}.
$$
The non-negative integer
$$
c^G_n(A)=\dim_FP_n^G(A), \ n\geq 1,
$$
is called the $n$-th $G$-graded codimension of $A$ and our aim here
is to study the asymptotic behavior of the sequence $c^G_n(A), \
n=1, 2, \ldots$ and to compare it with the ordinary case.

Notice that if for $i\ge 1$ we set $x_i=x_{i,g_1}+\cdots
+x_{i,g_s}$, then the free algebra on the countable set
$Y=\{x_1,x_2, \ldots\}$ is naturally embedded in $F\langle X,
G\rangle$ and we can regard the ordinary identities of $A$ as a
special kind of graded identities.


We denote by $P_n$ the space of multilinear polynomials in the
variables $x_1, \ldots, x_n$ and by $\I(A)=\{f\in F\langle Y\rangle
\mid f\equiv 0 \ \mbox{in}\ A\}$ the T-ideal of ordinary polynomial
identities of $A$. Then $c_n(A)=\dim_F{P_n}/({P_n\cap \I(A)})$ is
the $n$-th ordinary codimension of $A$.

It is well-known that for a general algebra $A$ satisfying an
ordinary polynomial identity (non necessarily finite dimensional),
the sequence of codimensions is exponentially bounded (\cite{reg1}).

The same conclusion about the $G$-codimensions cannot be drawn when
$A$ satisfies a $G$-graded polynomial identity, but not an ordinary
identity. For instance, if $|Y|>1,$ the free algebra $A=F\langle
Y\rangle$ with trivial $G$-grading satisfies the graded identities
$x_g\equiv 0$ for $g\neq 1,$  does not satisfy any ordinary identity
and $c^G_n(A)=n!.$ Nevertheless in case $A$ satisfies an ordinary
polynomial identity (e.g., is finite dimensional), the two sequences
are related by the following inequalities (\cite{gia-reg})
\begin{equation}\label{eq1}
c_n(A) \leq c_n^G(A) \leq |G|^n c_n(A), \ n\geq 1.
\end{equation}
Hence the sequence of $G$-codimensions is exponentially bounded and
we can try to compute its exponential rate of growth. In \cite{gz1}
and \cite{gz2} it was proved that if $A$ is any algebra satisfying
an ordinary polynomial identity,  then $\lim_{n\to
\infty}\sqrt[n]{c_n(A)}=\mbox{exp}(A)$ exists and is an integer
called the PI-exponent of the algebra $A$.

At the light of (\ref{eq1}), for a $G$-graded algebra satisfying an
ordinary polynomial identity, one can ask if $\lim_{n\to
\infty}\sqrt[n]{c_n^G(A)}$ exists and is an integer. Here we shall
give a positive answer to the above question when $A$ is a finite
dimensional algebra (hence it satisfies an ordinary polynomial
identity).

For fixed $n=n_1+\cdots+n_s>0$ let $P_{n_1, \ldots, n_s}\subseteq
P^G_n$ denote the subspace of multilinear graded polynomials in
which  the first $n_1$ variables are homogeneous of degree $g_1,$
next $n_2$ variables are homogeneous of degree $n_2$ and so on. Also
let
$$
P_{n_1, \ldots, n_s}(A)=\frac{P_{n_1, \ldots, n_s}}{P_{n_1,
\ldots, n_s}\cap \I^G(A)}
$$
and
$$
c_{n_1,\ldots, n_k}(A)=\dim P_{n_1, \ldots, n_s}(A).
$$
By counting dimensions it follows that
\begin{equation}\label{cG}
c_n^G(A)=\sum_{n_1+\cdots+n_s=n}\binom{n}{n_1, \ldots,
n_s}c_{n_1,\ldots, n_s}(A),
\end{equation}
where $\displaystyle\binom{n}{n_1, \ldots, n_k}=\frac{n!}{n_1!\cdots
n_s!}$\  denotes the generalized binomial coefficient. We shall use
(\ref{cG}) in order to deduce an upper and lower bound for
$c_n^G(A)$ from an upper and lower bound for $c_{n_1,\ldots,
n_k}(A)$.

Our methods will be based on the representation theory of the
symmetric group $S_n$. Now, the space $P_{n_1, \ldots, n_s}$ is
naturally endowed with a structure of $S_{n_1}\times \cdots \times
S_{n_s}$-module in the following way: $S_{n_1}\times \cdots \times
S_{n_s}$ acts on the left on $P_{n_1, \ldots, n_s}$ by permuting the
variables of homogeneous degree $g_1, \ldots, g_s,$ separately
($S_{n_i}$ permutes the variables of degree $g_i$). Since $\I^G(A)$
is invariant under this action,  $P_{n_1, \ldots, n_s}(A)$ inherits
a structure of $S_{n_1}\times \cdots \times S_{n_s}$-module and we
denote by $\chi_{n_1,\ldots, n_k}(A)$ its character.

By complete reducibility we can write
\begin{equation}\label{multipl}
\chi_{n_1,\ldots, n_s}(A)=\sum_{\langle \lambda\rangle\vdash
n}m_{\langle \lambda\rangle}\chi_{\lambda(1)}\otimes\cdots \otimes
\chi_{\lambda(s)},
\end{equation}
where $\langle \lambda \rangle=(\lambda(1), \ldots,
\lambda(s))\vdash (n_1, \ldots, n_s)$ is a multipartition of $n,$
i.e, $\lambda(1)\vdash n_1 ,\ldots, \lambda(s)\vdash n_s,$
$n_1+\cdots+n_s=n,$ and  $m_{\langle \lambda\rangle}\ge 0$ is the
multiplicity of $\chi_{\lambda(1)}\otimes\cdots \otimes
\chi_{\lambda(s)}$ in $\chi_{n_1,\ldots, n_k}(A)$. We call
$\chi_{n_1,\ldots, n_k}(A)$ the $(n_1, \ldots, n_s)$-cocharacter of
$A.$ A basic fact that we shall need in what follows is that the
multiplicities $m_{\langle \lambda\rangle}$ in (\ref{multipl}) are
polynomially bounded.

\begin{remark} \label{rem1}
There exist constants $a,b$ such that for all $n\ge 1$, $m_{\langle
\lambda\rangle}\le an^b$ in (\ref{multipl}).
\end{remark}

\begin{proof}
Since $G$ is a finite abelian group, there is a well-known duality
between $G$-gradings and  $G$-actions by automorphisms on the
associative algebra $A$. Let $\hat G$ be the group of linear
characters of $G$. Then $\hat G \cong G$ and one defines an action
of $\hat G$ on $A$ as follows: if $a= \sum_{g \in G}  a_g$ with $a_g
\in A_g$ and $\psi\in \hat G$, then $\psi (a)= \sum_{g \in G} \psi
(g) a_g$.

If we apply the above duality to the free $G$-graded algebra
$F\langle X, G\rangle$, we may regard $F\langle X, G\rangle$ as the
free algebra with $\hat G$-action $F\langle Y|\hat G\rangle$, on a
countable set $Y$.

 Recall that $F\langle Y | \hat G \rangle$ is freely
generated by the set $\{ y^\psi=\psi(y) | y \in Y, \psi \in \hat
G\}$ on which $\hat G$ acts as follows: $f(y_1^{\psi_1}, \dots
,y_n^{\psi_n})^\psi=f(y_1^{\psi\psi_1}, \dots ,y_n^{\psi\psi_n})$.
The elements of $F\langle Y | \hat G \rangle$  are called $\hat
G$-polynomials and the ones vanishing in $A$ are the $\hat
G$-identities of $A$. Let $\I(A|\hat G)$ be the ideal of $\hat
G$-identities of $A$. By \cite[Proposition 2]{GMZ} $\I(A|\hat
G)=\I^G(A)$. Moreover
$$
P_n^G= \mbox{span}\{y_{\sigma(1)}^{\psi_1}\cdots
y_{\sigma(n)}^{\psi_n} \mid \sigma\in S_n, \psi_i\in \hat G\}.
$$
Let $H=\hat G \wr S_n$ be the wreath product of $\hat G$ and $S_n$.
The group $H$ acts on the left on $P_n^G$ by setting for
$f(y_1,\ldots, y_n)\in P_n^G$ and $(\psi_1, \ldots, \psi_n,\sigma)
\in H,$ then
$$
(\psi_1, \ldots, \psi_n,\sigma) f(y_1,\ldots, y_n)=
f(y_{\sigma(1)}^{\psi_{\sigma(1)}^{-1}},\dots
,y_{\sigma(n)}^{\psi_{\sigma(n)}^{-1}}).
$$
Hence the ideals of $\hat G$-identities of $F\langle Y | \hat
G\rangle$ are invariant under the left $H$-action.
 This makes $P_n^G(A)=P_n^G/(P_n^G\cap \I(A|\hat G))$ a left
$H$-module.

Now the irreducible $H$-characters are indexed by multipartitions of
$n$. For $\langle \lambda \rangle$ a multipartition of $n$, let
$\chi_{\langle \lambda \rangle}$ be the corresponding irreducible
$H$-character. The $H$-character of $P_n^G(A)$ is
\begin{equation} \label{mult2}
\chi_n(A|G)= \sum_{\langle \lambda \rangle\vdash n} m'_{\langle
\lambda \rangle}\chi_{\langle \lambda \rangle}.
\end{equation}
The connection between the $H$-character of $P_n^G(A)$ given in
(\ref{mult2}) and the $S_{n_1}\times \cdots \times
S_{n_s}$-character $\chi_{n_1,\ldots, n_s}(A)$ given in
(\ref{multipl}), is given in \cite[Theorem 4]{GMZ}: for all
multipartitions $\langle \lambda \rangle=(\lambda(1), \dots
,\lambda(s))$ where $\lambda (1)\vdash m_1, \dots ,\lambda (s)\vdash
m_s$, we have $ m_{\langle \lambda \rangle}= m'_{\langle \lambda
\rangle}$. But then, since by \cite{berele}  the multiplicities
$m'_{\langle \lambda \rangle}$ in (\ref{mult2}) are polynomially
bounded, the proof follows.
\end{proof}


In what follows if $\lambda\vdash n$ is a partition of $n$ we shall
denote by  $\lambda' = (\lambda'_1, \ldots, \lambda'_r) \vdash n$
the conjugate partition of $\lambda$.

\begin{remark} \label{rem2}
Let $\langle \lambda \rangle=(\lambda(1), \dots ,\lambda(s))$ be a
multipartition of $n$ and $N_{\langle\lambda\rangle} \not \subseteq
\I^G(A)$ an irreducible $S_{n_1}\times \cdots \times
S_{n_s}$-module. Then $N_{\langle\lambda\rangle}= F(S_{n_1}\times
\cdots \times S_{n_s})f$ for some polynomial $f\in P_{n_1, \ldots,
n_s}$ such that for every $j\ge 1,$ $f$ is $(\lambda(1)'_j, \ldots,
\lambda(s)'_j)$-alternating, i.e., $f$ is alternating into $s$
disjoint sets of variables $X^{j}_{g_1}, \ldots, X^{j}_{g_s}$ of
homogeneous degree $g_1, \ldots, g_s,$ respectively.
\end{remark}

\begin{proof}
We consider the decomposition of the group algebra $F(S_{n_1}\times
\cdots \times S_{n_s})= \oplus_{\langle\lambda\rangle\vdash
n}I_{\langle\lambda\rangle}$ where $I_{\langle\lambda\rangle}$ is
the two-sided ideal associated to the multipartition
$\langle\lambda\rangle$. Moreover every $I_{\langle\lambda\rangle}$
decomposes as
$$
\sum_{T_{\lambda(1)},\ldots,T_{\lambda(s)}} F(S_{n_1}\times \cdots
\times S_{n_s})e_{T_{\lambda(1)}}\cdots e_{T_{\lambda(s)}}
$$
where, for every $i$, $e_{T_{\lambda(i)}}$ is the essential
idempotent of $FS_{n_i}$ corresponding to the tableau
$T_{\lambda(i)}$ of shape $\lambda(i)$. Recall that
$e_{T_{\lambda(i)}}=\left(\sum_{\sigma\in
R_{T_{\lambda(i)}}}\sigma\right)\left(\sum_{\tau\in
C_{T_{\lambda(i)}}}(\mbox{sgn}\, \tau)\tau\right),$ where
$R_{T_{\lambda(i)}}$ and $C_{T_{\lambda(i)}}$ are the subgroups of
$S_{n_i}$ stabilizing the rows and the columns of $T_{\lambda(i)}$,
respectively.

Since $I_{\langle\lambda\rangle}N_{\langle\lambda\rangle} =
N_{\langle\lambda\rangle}\not \subseteq \I^G(A)$, there exist
tableaux $T_{\lambda(1)},\ldots,T_{\lambda(s)}$ and $f\in
N_{\langle\lambda\rangle}$ such that $F(S_{n_1}\times \cdots \times
S_{n_s}) e_{T_{\lambda(1)}}\cdots e_{T_{\lambda(s)}}f =
N_{\langle\lambda\rangle}$. If we set
$$
r_i= \sum_{\tau\in C_{T_{\lambda(i)}}}(\mbox{sgn}\, \tau)\tau,
$$
$1\le i\le s$ then $r_1e_{T_{\lambda(1)}} \cdots
r_se_{T_{\lambda(s)}}f$ is the required polynomial.

\end{proof}

\section{The general setting}

Throughout this and the next sections $A$ will be a finite
dimensional $G$-graded algebra over a field $F$ of characteristic
zero, where $G$ is a finite abelian group. By the Wedderburn-Malcev
theorem (\cite{GZbook}), we can write $$A=B+J$$ where $B$ is a
maximal semisimple  subalgebra of $A$ and $J=J(A)$ is its Jacobson
radical. It is well-known that $J$ is a graded ideal, moreover by
\cite{} we assume, as we may, that $B$ is a $G$-graded subalgebra of
$A$. Hence we can write $$B=B_1\oplus\cdots\oplus B_m$$ where $B_1,
\ldots, B_m$ are $G$-graded simple algebras. We remark that the
$G$-codimensions of $A$ do not change upon extension of the base
field (see \cite{GZbook}). Hence, in order to compute the
exponential rate of growth of the $G$-codimensions of $A$ we assume
that the field $F$ is algebraically closed. We start by making a
definition

\begin{definition}
The $G$-dimension of $A$ is  $G$-$\dim A=(p_1, \ldots, p_s),$ where
$p_i=\dim A_{g_i}, \ 1\le i\le s=|G|$.
\end{definition}

We say that a semisimple algebra $C=C_1\oplus\cdots\oplus C_k$,
where $C_1, \ldots, C_k\in \{B_1, \ldots, B_m\}$ are distinct, is
admissible if $C_1JC_2J\cdots C_{k-1}JC_k \ne 0$. We then define
\begin{equation} \label{d}
d(A)= \mbox{max}\ (\dim C)
\end{equation}
 where $C$ runs over all
admissible subalgebras of $B$.

In Theorem \ref{mainth} below we shall prove that $d(A)$ coincides
with $\lim_{n\to\infty}\sqrt[n]{c_n^G(A)}$.

Next we introduce multialternating polynomials that will be
essential tools throughout the paper.

Let $t>0$ be an integer and, for $i=1, \ldots s$, define
$$X_{g_i}^j=\{x_{1,g_i}^j,\ldots,
x_{m_i,g_i}^j\}\subseteq X_{g_i}, \ j=1, \ldots, t,$$  $t$ distinct
sets of  $m_i\geq 0$ variables of homogeneous degree $g_i$. Let also
$Y\subseteq \bigcup_{i=1}^s X_{g_i}$  be another set of homogeneous
variables disjoint from the previous sets.

Also, let $f=f(X^1_{g_1}, \ldots, X^1_{g_s}, \ldots, X^t_{g_1},
\ldots, X^t_{g_s},Y)\in F\langle X,G \rangle$ be a multilinear
graded polynomial in the variables from the sets $X^j_{g_i}$ and
$Y,$ $i=1, \ldots, s$ and $j=1, \ldots, t.$

\begin{definition} If $f$ is alternating in the indeterminates of
each set $X^j_{g_i},$ then we say $f$  $t$-fold $(m_1, \ldots,
m_s)$-alternating. In case $t=1$  we simply say that $f$  is  $(m_1,
\ldots, m_s)$-alternating.
\end{definition}

Throughout we make also the convention that $\bar x$ is an
evaluation of the variable $x$ in $A$. Accordingly, $\bar X^i_{g_j}$
stands for an evaluation of the set $X^i_{g_j}$ into $A_{g_j}$.

\begin{lemma}
Let $d=d(A)$ be the integer defined above and let $$f=f(X^1_{g_1},
\ldots, X^1_{g_s}, \ldots, X^t_{g_1}, \ldots, X^t_{g_s},Y)$$ be a
polynomial $t$-fold $(m_1, \ldots, m_s)$-alternating. If
$m_1+\cdots+m_s>d$ then
$$
f(\bar X^1_{g_1}, \ldots, \bar X^1_{g_s}, \ldots, \bar X^t_{g_1}, \ldots,
\bar X^t_{g_s},\bar Y)=0
$$
for all evaluations $\bar X^j_{g_i}$ in $B_{g_i}$ and $\bar Y$ in
$A.$
\end{lemma}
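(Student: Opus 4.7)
The plan is to argue by contradiction: assume $f(\bar X,\bar Y)\ne 0$ for some evaluation with each $\bar X^j_{g_i}\subset B_{g_i}$ and $\bar Y\subset A$, and produce an admissible subalgebra of $B$ of dimension strictly greater than $d(A)$. First, by multilinearity I would expand each $\bar y$ in a basis of $A$ adapted to the Wedderburn--Malcev decomposition $A=B_1\oplus\cdots\oplus B_m\oplus J$, so that each $\bar y$ is a basis element of some $B_k$ or of $J$. In each set $X^j_{g_i}$ I would similarly expand in the basis of $B_{g_i}$ obtained as the union of bases of the homogeneous components $(B_k)_{g_i}$. Since $f$ is alternating in $X^j_{g_i}$, every term in this expansion that assigns the same basis element to two variables of $X^j_{g_i}$ vanishes; without loss of generality the $m_i$ chosen basis elements of $X^j_{g_i}$ are distinct.

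Next, write $f=\sum c_w w$ and pick a monomial $w$ with $w(\bar X,\bar Y)\ne 0$. Its evaluation is a product of basis elements of $B_k$'s and of $J$. Two consecutive $B$-factors with no intervening $J$ must lie in the same component, since $B_kB_{k'}=0$ for $k\ne k'$. Grouping maximal $B$-runs separated by nonempty products of $J$-basis elements, and noting that any $J$-prefix or $J$-suffix can be stripped because $J^aX\ne 0$ forces $X\ne 0$, one reads off a sequence $k_1,\ldots,k_r$ of component-indices (in order of appearance, possibly with repetitions) such that $B_{k_1}JB_{k_2}J\cdots JB_{k_r}\ne 0$.

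The main obstacle is converting this chain, which may have repeated components, into an admissible subalgebra in the sense of the definition preceding the lemma. I would let $i_1<\cdots<i_p$ index the first occurrences of the distinct values, so that $K:=\{k_{i_1},\ldots,k_{i_p}\}$ enumerates them exactly once. Between two successive first occurrences $i_t$ and $i_{t+1}$ every intermediate $B$-factor is flanked by $J$'s, and since $J$ is a two-sided ideal one has $JAJ\subseteq J$. Iterating this absorption yields the subspace inclusion $B_{k_1}JB_{k_2}J\cdots JB_{k_r}\subseteq B_{k_{i_1}}JB_{k_{i_2}}J\cdots JB_{k_{i_p}}$, so the right-hand side is nonzero and $C:=B_{k_{i_1}}\oplus\cdots\oplus B_{k_{i_p}}$ is an admissible subalgebra of $B$.

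To conclude I would bound $\dim C$ from below using the first batch. Each variable of $X^1_{g_i}$ evaluates to a distinct basis element of some $(B_k)_{g_i}$, and every such $k$ lies in $K$ because the batch-$1$ variables all appear in $w$ and so contribute to the runs. Since the basis elements landing in a fixed $B_k$ come from different homogeneous components and are distinct within each, their number is at most $\sum_i\dim(B_k)_{g_i}=\dim B_k$; summing over $k\in K$ gives $m_1+\cdots+m_s\le\sum_{k\in K}\dim B_k=\dim C$. Hence $\dim C\ge m_1+\cdots+m_s>d(A)$, contradicting the maximality of $d(A)$ over admissible subalgebras and completing the argument.
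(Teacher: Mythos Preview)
Your argument is correct and follows essentially the same route as the paper: reduce by multilinearity to basis elements, read off from a surviving monomial a chain $B_{k_1}JB_{k_2}\cdots JB_{k_r}\ne 0$, and produce an admissible subalgebra of dimension at least $m_1+\cdots+m_s>d$. You are in fact more explicit than the paper at the one delicate point---the paper simply asserts that $\dim(B_{i_1}+\cdots+B_{i_l})>d$ forces $B_{i_1}J\cdots JB_{i_l}=0$ without saying how to pass to distinct components, whereas your absorption $JAJ\subseteq J$ does exactly this (note only that your literal inclusion can acquire a trailing $J$ when $k_r$ is a repeat, but $XJ\ne 0\Rightarrow X\ne 0$ still gives what you need).
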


\begin{proof}
Being $f$  multilinear it is clearly enough to evaluate $f$ into
elements of a basis of $A$. Let $\mathcal{A}=\mathcal{B}\cup
\mathcal{J}$ be a homogeneous basis of $A$ where
$\mathcal{B}=\cup_{i=1}^s \mathcal{B}_{g_i}$ is a basis of $B$ and
$\mathcal{J}=\cup_{i=1}^s \mathcal{J}_{g_i}$ is a basis of $J.$
Hence for $i=1, \ldots, s,$ $\mathcal{B}_{g_i}$ and
$\mathcal{J}_{g_i}$ are bases of $B_{g_i}$ and $J_{g_i},$
respectively.

Suppose $f$ does not vanish in $A$. Then, if we evaluate $f$ on $B,$
since $B_iB_j=0$ for $i\neq j,$ in order to get a non-zero value we
have to evaluate $f$ only on a $G$-simple component, say $B_r.$
Since $\dim B_r\leq d$ and $m_1+\cdots+m_s>d$ there exists $1\leq
j\leq m$ such that $m_j>\dim (B_r)_{g_j}.$ Hence $f$ vanishes on a
basis of $B_r,$ being alternating on $m_j$ elements of homogeneous
degree $g_j.$

Therefore, in order to get a non-zero value of $f$ we should
evaluate at least one variable in $J.$ Moreover we may assume that
for every $i=1, \ldots, s,$ $\dim B_{g_i}\geq m_i.$  But then the
non-zero monomials of $f$ evaluate into $B_{i_1}JB_{i_2}J\cdots
JB_{i_l}$ or $JB_{i_1}JB_{i_2}J\cdots JB_{i_l},$ or
$B_{i_1}JB_{i_2}J\cdots JB_{i_l}J$ or $JB_{i_1}JB_{i_2}J\cdots
JB_{i_l}J,$ where $B_{i_1}, \ldots, B_{i_l}\in \{B_1, \ldots, B_m\}$
are not necessarily distinct $G$-simple components. Since  $\dim
(B_{i_1}+\cdots +B_{i_l})\geq m_1+\cdots +m_s> d$ we get that
$B_{i_1}JB_{i_2}J\cdots JB_{i_l}=0$ and we reach a contradiction.
\end{proof}

\begin{lemma}
Let $\langle \lambda \rangle=(\lambda(1), \ldots, \lambda(s))\vdash
(n_1, \ldots, n_s)$ be a multipartition of $n$ and let $N_{\langle
\lambda \rangle}$ be an irreducible $S_{n_1}\times\cdots \times
S_{n_s}$-module participating with non-zero multiplicity in $P_{n_1,
\ldots, n_s}(A)$. Then
\begin{itemize}
\item[1)]
$\lambda(i)_1'\le \dim A_{g_i}, \ 1\le i\le s$.
\item[2)]
$\lambda(1)'_{l+1}+\cdots+\lambda(s)'_{l+1}\le d=d(A),$ where
$J^{l+1}=0,$
\item[3)]
$\dim N_{\langle \lambda \rangle} \le n^a
(\lambda(1)'_{l+1})^{n_1}\cdots (\lambda(s)'_{l+1})^{n_s}$, for some
$a\ge 1.$
\end{itemize}
\end{lemma}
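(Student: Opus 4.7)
The plan is to use Remark \ref{rem2} to exhibit a polynomial $f\in N_{\langle\lambda\rangle}\setminus\I^G(A)$ which, for every $j\ge 1$, is alternating in $s$ disjoint sets $X^j_{g_1},\dots,X^j_{g_s}$ of variables of respective sizes $\lambda(1)'_j,\dots,\lambda(s)'_j$, and then to derive 1) and 2) by feeding different pieces of this alternation data into the preceding multialternating lemma. Part 3) will follow from a standard combinatorial estimate on the dimensions of irreducible $S_{n_i}$-representations.

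For part 1), the first column of the tableau $T_{\lambda(i)}$ provides $\lambda(i)'_1$ variables of homogeneous degree $g_i$ in which $f$ is alternating; if $\lambda(i)'_1>\dim A_{g_i}$, any graded evaluation must assign two of these variables the same basis vector of $A_{g_i}$, so alternation forces $f\equiv 0$, contradicting the choice of $N_{\langle\lambda\rangle}$.

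For part 2), I restrict attention to the first $l+1$ columns of each $T_{\lambda(i)}$. Because $\lambda(i)'_j\ge\lambda(i)'_{l+1}$ for $j\le l+1$, I may discard variables so that $f$ becomes $(l+1)$-fold $(\lambda(1)'_{l+1},\dots,\lambda(s)'_{l+1})$-alternating. Suppose for contradiction that $\lambda(1)'_{l+1}+\cdots+\lambda(s)'_{l+1}>d$. Applying the preceding lemma with $t=1$ to a single set $X^j$, treating the remaining $l$ alternating sets as part of the $Y$-tuple, shows that any graded evaluation in which $X^j$ is sent entirely into $B$ produces zero. Consequently, in every non-zero evaluation of $f$ on $A$, each of the $l+1$ disjoint sets $X^1,\dots,X^{l+1}$ must contribute at least one variable sent to $J$, so the resulting word in $A$ contains at least $l+1$ factors from $J$. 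Since $J$ is a two-sided ideal of $A$, any such word lies in $J^{l+1}=0$; hence $f\equiv 0$ on $A$, a contradiction, and the inequality in 2) follows.

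For part 3), the decomposition $\dim N_{\langle\lambda\rangle}=d_{\lambda(1)}\cdots d_{\lambda(s)}$ reduces the task to bounding each $d_{\lambda(i)}$ individually. I would invoke the standard estimate: for a partition $\mu\vdash m$ of bounded height $\mu'_1\le h$ with $\mu'_{l+1}=k$, one has $d_\mu\le m^{c}k^{m}$ for a constant $c=c(l,h)$. The idea is to split $\mu$ into the piece $\mu^{(1)}$ inside the first $l$ columns and the piece $\mu^{(2)}$ in the columns beyond: by part 1) the height is bounded by $\dim A_{g_i}$, so $|\mu^{(1)}|\le l\cdot\dim A_{g_i}$ is a constant, while $\mu^{(2)}$ has all columns of length $\le k$ and hence at most $k$ rows, giving a crude bound $d_{\mu^{(2)}}\le k^{|\mu^{(2)}|}\le k^m$; a hook-length or branching bookkeeping absorbs the remaining combinatorial overhead into the polynomial factor $m^c$. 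Multiplying over $i=1,\dots,s$ yields the claimed inequality with $a=c_1+\cdots+c_s$. The main obstacle is precisely this last step: the conceptual splitting is straightforward, but tracking the polynomial factor carefully through the branching is delicate, and the cleanest route is to import the estimate as a black box from the Giambruno--Zaicev asymptotic theory.
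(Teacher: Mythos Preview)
Your proposal is correct and follows essentially the same route as the paper: Remark~\ref{rem2} produces the multialternating generator $f$, part~1) is the obvious pigeonhole, part~2) applies the preceding lemma once per column index $j\le l+1$ to force a radical substitution in each alternating block, and part~3) is the standard column-splitting estimate. Two minor remarks: in part~2) the ``discarding'' step is unnecessary, since $\lambda(1)'_j+\cdots+\lambda(s)'_j\ge\lambda(1)'_{l+1}+\cdots+\lambda(s)'_{l+1}>d$ already for each $j\le l+1$, so the preceding lemma applies directly; and for part~3) the paper dispatches the estimate in one line via the hook formula, obtaining $\deg\chi_{\lambda(j)}\le n^{p_j l}(\lambda(j)'_{l+1})^{n_j}$ (bound the hook-length product over the columns beyond the $l$-th from below by that of the shifted diagram, which has at most $\lambda(j)'_{l+1}$ rows), so no black-box import is needed.
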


\begin{proof}
By Remark \ref{rem2}, $N_{\langle\lambda\rangle}=
F(S_{n_1}\times\cdots \times S_{n_s})f$ where $f\in P_{n_1, \ldots,
n_s}$ is not an identity of $A$ and  for $j\ge 1,$ $f$ is
$(\lambda(1)'_j, \ldots, \lambda(s)'_j)$-alternating, i.e., $f$ is
alternating into $s$ disjoint sets of variables $X^{j}_{g_1},
\ldots, X^{j}_{g_s}$ of homogeneous degree $g_1, \ldots, g_s,$
respectively. Since $f$ is not an identity for $A,$ we must have
$\lambda(k)'_j\le \dim A_{g_k},$ for all $k=1, \ldots, s$ and $j\ge
1.$ This proves the first part of the lemma.

Suppose by contradiction that
 $\lambda(1)'_{l+1}+\cdots+\lambda(s)'_{l+1} > d.$
In particular, this says that
$\lambda(1)'_{j}+\cdots+\lambda(s)'_{j} > d$ for $j=1, \ldots, l+1$.
Since by hypothesis $f$  is not an identity of $A$, and $f$ is
$(\lambda(1)'_j, \ldots, \lambda(s)'_j)$-alternating, by the
previous lemma, in order to get a non-zero value, for $j=1, \ldots,
l+1,$ at least one variable in the sets $$X^{j}_{g_1}, \ldots,
X^{j}_{g_s}$$ must be evaluated in $J$. But $J^{l+1}=0$ says that
$f$ vanishes on $A$, a contradiction.

Let $\dim A_{g_1}=p_1, \ldots,  \dim A_{g_s}= p_s.$  From the hook
formula giving the degree of an irreducible $S_n$-character (see
\cite{JK}), we get that for $j=1, \ldots, s$,
$$\deg \chi_{\lambda(j)}\leq n^{p_jl}{(\lambda(j)'_{l+1})}^{n_j}.$$
Hence
$$
\dim N_{\langle \lambda\rangle} =\deg \chi_{\lambda(1)}\cdots \deg
\chi_{\lambda(s)}\le n^a {(\lambda(1)'_{l+1})}^{n_1}\cdots
{(\lambda(s)'_{l+1})}^{n_s},$$ and we are done.
\end{proof}

We are now in a position to determine a useful  upper bound for
$c_n^G(A)$.

\begin{lemma} \label{lem6}
There exist constants $a,t$ such that $c_n^{G}(A) \le an^td^n$.
\end{lemma}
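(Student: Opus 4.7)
The plan is to combine formula (\ref{cG}) with the decomposition of $P_{n_1,\ldots,n_s}(A)$ into isotypic components and bound each factor using the results already assembled (Remark \ref{rem1} and the previous lemma).

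First I would write
\[
c_{n_1,\ldots,n_s}(A)=\sum_{\langle\lambda\rangle\vdash(n_1,\ldots,n_s)} m_{\langle\lambda\rangle}\,\dim N_{\langle\lambda\rangle}.
\]
By Remark \ref{rem1}, $m_{\langle\lambda\rangle}\le a_1 n^{b_1}$, and by part (3) of the previous lemma, $\dim N_{\langle\lambda\rangle}\le n^{a_2}\prod_{i=1}^s (\lambda(i)'_{l+1})^{n_i}$. Part (1) of the previous lemma bounds $\lambda(i)'_1\le p_i=\dim A_{g_i}$, so each $\lambda(i)$ is a partition of $n_i$ with parts of length $\le p_i$; the number of such partitions is at most $(n_i+1)^{p_i}$, hence the number of multipartitions contributing to the above sum is polynomially bounded in $n$.

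The crucial constraint is part (2) of the previous lemma, which gives $q_1+\cdots+q_s\le d$ whenever we set $q_i=\lambda(i)'_{l+1}$, together of course with $q_i\le p_i$. Writing $Q$ for the (finite) set of tuples $(q_1,\ldots,q_s)$ satisfying $0\le q_i\le p_i$ and $\sum q_i\le d$, I would estimate
\[
\sum_{\langle\lambda\rangle}\prod_{i=1}^s (\lambda(i)'_{l+1})^{n_i}\ \le\ (\text{poly}(n))\cdot\max_{(q_1,\ldots,q_s)\in Q}\prod_{i=1}^s q_i^{n_i},
\]
so that $c_{n_1,\ldots,n_s}(A)\le a_3 n^{b_3}\max_{(q_i)\in Q}\prod_i q_i^{n_i}$ for suitable constants.

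Plugging into (\ref{cG}) and dominating the maximum by a sum over $Q$ (which has cardinality bounded by a constant depending only on $A$), I would conclude
\[
c_n^G(A)\le a_3 n^{b_3}\sum_{n_1+\cdots+n_s=n}\binom{n}{n_1,\ldots,n_s}\sum_{(q_i)\in Q}\prod_i q_i^{n_i}=a_3 n^{b_3}\sum_{(q_i)\in Q}(q_1+\cdots+q_s)^n\le a\,n^t\,d^n,
\]
where the last inequality uses the multinomial theorem and $\sum q_i\le d$. The only real step is the exchange of sum and maximum via $Q$; once that trick is identified, the multinomial theorem converts the constraint $\sum q_i\le d$ into the desired exponential bound $d^n$, and the polynomial factor simply collects all the polynomial losses from Remark \ref{rem1}, from counting multipartitions, and from part (3) of the previous lemma.
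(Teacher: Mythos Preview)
Your proof is correct and follows essentially the same route as the paper's: bound each $c_{n_1,\ldots,n_s}(A)$ using Remark~\ref{rem1} and parts (1)--(3) of the previous lemma, replace the sum over multipartitions by a sum over the finite set of tuples $(q_1,\ldots,q_s)$ with $\sum q_i\le d$, then apply the multinomial theorem inside (\ref{cG}). The only difference is cosmetic---you spell out explicitly why the number of contributing multipartitions is polynomially bounded (via part (1)), whereas the paper absorbs this into the polynomial factor without comment.
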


\begin{proof} By part 2) of the previous lemma we can write
$$
\chi_{n_1,\ldots, n_k}(A)=\sum_{\langle \lambda\rangle\vdash n\atop
0\le \lambda(1)'_{l+1}+\cdots+\lambda(s)'_{l+1} \le d}m_{\langle
\lambda\rangle}(\chi_{\lambda(1)}\otimes\cdots \otimes
\chi_{\lambda(s)}).
$$
Hence by part 3) of the previous lemma,
$$
c_{n_1,\ldots, n_k}(A)\leq\sum_{\langle \lambda\rangle\vdash n\atop
0\le \lambda(1)'_{l+1}+\cdots+\lambda(s)'_{l+1} \le d}m_{\langle
\lambda\rangle}n^\alpha {(\lambda(1)'_{l+1})}^{n_1}\cdots
{(\lambda(s)'_{l+1})}^{n_s},
$$
for some constant $\alpha$. By Remark \ref{rem1} the multiplicities
$m_{\langle\lambda\rangle}$ are polynomially bounded; hence by
(\ref{cG}) we obtain
$$
c_n^G(A)=\sum_{n_1+\cdots+n_k=n}\binom{n}{n_1, \ldots,
n_k}c_{n_1,\ldots, n_k}(A)$$ $$\le an^b \sum_{0\leq
t_1+\cdots+t_s\le d} \sum_{n_1+\cdots+n_k=n}\binom{n}{n_1, \ldots,
n_k} t_1^{n_1}\cdots t_s^{n_s}
$$
$$
=  an^b \sum_{0\leq t_1+\cdots+t_s\le d} (t_1+\cdots+t_s)^n \le
an^bd^{n+s}.
$$
\end{proof}

\section{$G$-simple algebras and multialternating polynomials}

Throughout this section $A=\oplus_{g\in G}A_{g}$ will be a
$G$-simple algebra over an algebraically closed field $F$ where $G$
is a finite abelian group. Our aim is to construct a $t$-fold $(p_1,
\ldots, p_s)$-alternating polynomial for every even integer $t>0$,
where $(p_1, \ldots, p_s)=G$-$\dim A$.

The construction relies on a structure theorem proved by Bahturin,
Seghal and Zaicev in which they fully describe the $G$-grading on a
$G$-simple algebra in terms of fine and elementary gradings. It
should be mentioned that their result holds for arbitrary groups
(and not only for finite abelian groups).

\begin{theorem}[\cite{BSZ}]\label{BSZ} Let $A$ be a $G$-simple algebra. Then there exists
a subgroup $H$ of $G$, a $2$-cocycle $f:H\times H\longrightarrow
F^{*}$ where the action of $H$ on $F$ is trivial, an integer $k$ and
a $k$-tuple $(g_1=1,g_2,\ldots,g_k)\in G^k$ such that $A$ is
$G$-graded isomorphic to $C=F^{f}H\otimes M_{k}(F)$ where $C_g=
span_F\{b_h \otimes E_{i,j}: g=g_i^{-1}hg_j\}$. Here $b_h \in
F^{f}H$ is a representative of $h\in H$ and $E_{i,j}\in M_{k}(F)$ is
the $(i,j)$ elementary matrix.
\end{theorem}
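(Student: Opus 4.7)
The plan is to mimic the classical Wedderburn--Artin structure theorem in the graded category and couple it with a classification of graded division algebras over the algebraically closed field $F$. The theorem then falls out from identifying the fine part with a twisted group algebra and the elementary part with matrices indexed by the degrees of a homogeneous basis.

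First, I would pick a minimal $G$-graded left ideal $V\subseteq A$; such an ideal exists because $A$ is finite dimensional. By $G$-simplicity, the sum of all $G$-graded shifts of $V$ equals $A$. Setting $D:=\operatorname{End}^{\mathrm{gr}}_A(V)^{op}$, a graded Schur lemma (every nonzero homogeneous graded endomorphism has trivial graded kernel, hence is invertible by dimension count) shows that $D$ is a \emph{graded division algebra}: every nonzero homogeneous element is invertible. A graded density/double-centralizer argument, which in the finite-dimensional setting reduces to a dimension count, then gives a $G$-graded isomorphism $A\cong\operatorname{End}^{\mathrm{gr}}_{D}(V)$.

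Next, I would classify $D$. Let $H=\operatorname{Supp}(D)\subseteq G$; this is a subgroup because $D$ is a graded division algebra. The identity component $D_e$ is a finite-dimensional division $F$-algebra, so $D_e=F$ by algebraic closure. For each $h\in H$, the product $D_h\cdot D_{h^{-1}}\subseteq D_e=F$ is nonzero, which forces $\dim_F D_h=1$. Choosing nonzero representatives $b_h\in D_h$, the relations $b_hb_{h'}=f(h,h')\,b_{hh'}$ define a $2$-cocycle $f\colon H\times H\to F^{*}$ and identify $D$ with the twisted group algebra $F^{f}H$. Finally, since $D$ is a graded division algebra, $V$ is graded-free over $D$; picking a homogeneous $D$-basis $v_1,\ldots,v_k$ with $\deg v_i=g_i$ (and $g_1=e$ after a harmless shift) one obtains $\operatorname{End}^{\mathrm{gr}}_{D}(V)\cong F^{f}H\otimes M_k(F)$ as ungraded algebras. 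The homogeneous component of degree $g$ is then spanned by those $b_h\otimes E_{ij}$ whose action on the basis has the right degree, i.e.\ $g=g_i^{-1}hg_j$, giving precisely the grading on $C$ in the statement.

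The step I expect to be the main obstacle is the classification of graded division algebras in the middle paragraph, because it is the point at which the hypotheses ``$F$ algebraically closed'' and ``$G$ abelian'' are actually used: one needs $D_e=F$ (algebraic closure) to ensure $\dim_F D_h=1$, and in the non-abelian case one would have to track how conjugation by $b_h$ twists $D_e$, spoiling the clean twisted-group-algebra description. The graded Wedderburn and density statements in the first paragraph, while standard, also require some care to be sure that all isomorphisms are \emph{graded} isomorphisms and not merely ungraded ones; this boils down to working consistently with homogeneous generators throughout.
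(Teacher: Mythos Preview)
The paper does not prove this theorem: Theorem~\ref{BSZ} is quoted from \cite{BSZ} and used as a black box, the authors explicitly saying that ``the construction relies on a structure theorem proved by Bahturin, Seghal and Zaicev.'' There is therefore no in-paper proof to compare your sketch against.

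For what it is worth, your outline is the standard route to this result and is essentially correct. A finite-dimensional $G$-simple algebra has $J(A)=0$ (the radical is graded, hence $0$ or $A$), so $A$ is graded semisimple and decomposes as a graded left module into shifts of a single graded simple $V$; the graded density/double-centralizer then gives $A\cong\operatorname{End}_D(V)$ with $D$ a graded division algebra. Your identification $D\cong F^{f}H$ is exactly where algebraic closure is used, via $D_e=F$. One small correction: the paper itself remarks that the BSZ theorem holds for arbitrary groups, so abelianness is not actually needed here---conjugation by $b_h$ on $D_e=F$ is trivial in any case. Finally, be careful with conventions when computing the degree of $b_h\otimes E_{ij}$: depending on whether $V$ is a left or right $D$-module and on which side the matrix units act, the naive calculation gives $g_ihg_j^{-1}$; matching the stated formula $g=g_i^{-1}hg_j$ amounts to normalizing $\deg v_i=g_i^{-1}$ (or, equivalently, transposing the matrix convention).
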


We start with an analysis of a twisted group algebra $F^{f}H$ where
$H$ is a finite abelian group. This is well known but we include it
here for the reader convenience.

Let $H$ be a finite abelian group and $f:H\times H\longrightarrow
F^{*}$ a $2$-cocycle. Consider the function
$\alpha=\alpha_{f}:H\times H\longrightarrow F^{*}$ defined by
$\alpha(h_{1},h_{2})=f(h_{1},h_{2})f^{-1}(h_{2},h_{1})$. It follows
that if $\{u_h\}_{h\in H}$ is a set of representatives in $F^{f}H$
of the elements of $H$ then
$u_{h_1}u_{h_2}=\alpha(h_1,h_2)u_{h_2}u_{h_1}$ (roughly speaking the
function $\alpha$ is determined by the commuting relations of
representatives of elements of $H$ in $F^{f}H$). The following facts
follow directly from the definitions:
\begin{enumerate}
\item
the function $\alpha$ is determined by the commuting relations of
representatives in $F^{f}H$ of generators of $H$.
\item
if $f^{'}$ is a $2$-cocycle cohomologous to $f$ (that is there
exists a function $\lambda:H\longrightarrow F^{*}$ such that for
every $h_1, h_2$ in $H$,
$f^{'}(h_1,h_2)=\lambda_{h_1}\lambda_{h_2}\lambda_{h_{1}h_{2}}^{-1}f(h_1,h_2)$)
then $\alpha_{f}=\alpha_{f^{'}}$.
\end{enumerate}
It follows that the correspondence $f\longrightarrow \alpha_{f}$
induces a homomorphism from $H^{2}(H,F^{*})$ to
$Hom(\bigwedge^{2}H,F^{*})$. Here, $\bigwedge^{2}H$ denotes the
Schur multiplier of the group $H$ and is isomorphic to $(H
\otimes_{\mathbb{Z}} H)/W$ where $W$ is the subgroup generated by
all $\{h\otimes h \mid h \in H\}$.

\begin{proposition}
The map $\alpha$ is an isomorphism.
\end{proposition}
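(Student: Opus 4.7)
The plan is to check first that $\alpha$ actually lands in $\mathrm{Hom}(\bigwedge^{2}H, F^{*})$, then exhibit surjectivity by an explicit construction, and finally establish injectivity by a rescaling argument using algebraic closedness of $F$. For the first step, alternation $\alpha_{f}(h,h)=1$ is built into the definition. Bilinearity is the one non-trivial point: starting from the $2$-cocycle identity
$$
f(xy,z)\, f(x,y) = f(x,yz)\, f(y,z),
$$
applied in three different ways, and using crucially that $yx=xy$ in the abelian group $H$, one obtains $\alpha_{f}(x_{1}x_{2},y)=\alpha_{f}(x_{1},y)\,\alpha_{f}(x_{2},y)$. Combined with $\alpha_{f}(x,y)\,\alpha_{f}(y,x)=1$, which is clear from the definition, this gives bilinearity in both arguments, and $\alpha_{f}$ therefore descends to $\bigwedge^{2}H$.

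For surjectivity, fix a decomposition $H=\langle h_{1}\rangle\times\cdots\times\langle h_{r}\rangle$ with $|h_{i}|=n_{i}$, and let a target homomorphism $\alpha\colon\bigwedge^{2}H\to F^{*}$ be given. Set $c_{ij}=\alpha(h_{i}\wedge h_{j})$ for $i<j$; since $h_{i}^{n_{i}}=e$, each $c_{ij}$ is a root of unity satisfying $c_{ij}^{n_{i}}=c_{ij}^{n_{j}}=1$. Define
$$
f\bigl(h_{1}^{a_{1}}\cdots h_{r}^{a_{r}},\, h_{1}^{b_{1}}\cdots h_{r}^{b_{r}}\bigr)=\prod_{i<j}c_{ij}^{\,a_{i}b_{j}}.
$$
The order constraints on the $c_{ij}$ make this well-defined modulo each $n_{i}$, and being $\mathbb{Z}$-bilinear in the exponents, $f$ is automatically a $2$-cocycle. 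A direct check on generators yields $\alpha_{f}(h_{i},h_{j})=c_{ij}$ for $i<j$, whence $\alpha_{f}=\alpha$ by bilinearity.

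For injectivity, suppose $\alpha_{f}\equiv 1$. Then the chosen representatives $\{u_{h}\}_{h\in H}$ pairwise commute, so $F^{f}H$ is a commutative algebra. Normalize $u_{e}=1$ and set $u_{i}=u_{h_{i}}$; since $h_{i}^{n_{i}}=e$ and $u_{i}$ is invertible in $F^{f}H$, one has $u_{i}^{n_{i}}=c_{i}\cdot 1$ for some $c_{i}\in F^{*}$. Using that $F$ is algebraically closed, pick $d_{i}\in F^{*}$ with $d_{i}^{n_{i}}=c_{i}$ and set $s(h_{i})=d_{i}^{-1}u_{i}$, so that $s(h_{i})^{n_{i}}=1$. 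Because the elements $s(h_{i})$ commute and have the correct orders, the rule $h_{1}^{a_{1}}\cdots h_{r}^{a_{r}}\mapsto s(h_{1})^{a_{1}}\cdots s(h_{r})^{a_{r}}$ extends to a well-defined group homomorphism $s\colon H\to (F^{f}H)^{\times}$ splitting the projection to $H$; choosing this splitting as a new system of representatives replaces $f$ by the trivial cocycle, so $f$ is a coboundary.

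The main technical subtlety is organizing the bilinearity verification for $\alpha_{f}$, which is the point at which the abelianness of $H$ really enters; algebraic closedness of $F$ is used in an essential way only in the extraction of the $n_{i}$-th roots $d_{i}$ in the injectivity step.
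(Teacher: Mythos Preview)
Your argument is correct. The paper, however, does not give a proof at all: it simply refers the reader to Brown's textbook on group cohomology. So your approach is ``different'' in the strongest possible sense---you supply a direct, self-contained argument where the paper offers only a citation. Your surjectivity construction is in fact exactly the canonical cocycle $f_{\alpha}$ that the paper writes down (with a harmless reversal of the index ordering) immediately after the Proposition and uses in the subsequent analysis; it is nice that your explicit section makes that construction appear naturally rather than ex nihilo. Your injectivity argument via rescaling representatives to obtain a genuine splitting $H\to (F^{f}H)^{\times}$ is the standard elementary route and correctly isolates the one place where algebraic closedness (more precisely, the existence of $n_i$-th roots in $F^{*}$) is needed. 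The bilinearity check for $\alpha_{f}$, which you flag as the delicate point, is indeed where commutativity of $H$ enters, and the three-fold application of the cocycle identity you outline goes through without difficulty.
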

\begin{proof}
 See \cite[Chapter V]{br}.
\end{proof}

It follows that if $H\cong C_{n_1}\times\cdots \times C_{n_q}=
\langle h_1\rangle \times\cdots\times\langle h_q\rangle$ is a
decomposition of $H$ into cyclic groups of order $n_1,\ldots,n_q,$
respectively,  a cohomology class in $H^{2}(H,F^{*})$ is determined
by the value of $\alpha$ on pairs of generators $(h_i,h_j)$ for
$1\leq i<j\leq q$. In order to determine a cohomology class one may
choose for each $1\leq i<j\leq q$ an arbitrary root of unity whose
order divides the $g.c.d(n_i,n_j)$. Furthermore, different choices
determine different cohomology classes. We have therefore that the
cohomology classes are in $1-1$ correspondence with $\binom q
2$-tuples of roots of unity $\alpha(h_i,h_j)$ that satisfy the
condition $|\alpha(h_i,h_j)|$ divides $g.c.d(n_i,n_j)$. In the
sequel we will use freely the correspondence $f\longrightarrow
\alpha_{f}$.

Given a cohomology class $\alpha$ on an abelian group $H$ it is
convenient to attach to it a specific representative $2$-cocycle
$f$. This will permit us to be precise when multiplying elements in
a twisted group algebra. Consider the decomposition above of $H$
into cyclic subgroups and let
$h=h_{1}^{r_1}h_{2}^{r_2}\cdot\cdots\cdot h_{q}^{r_q}$ and
$h^{'}=h_{1}^{s_1}h_{2}^{s_2}\cdot\cdots\cdot h_{q}^{s_q}$ be
arbitrary elements. Set
$f_{\alpha}(h,h^{'})=f_{\alpha}(h_{1}^{r_1}h_{2}^{r_2}\cdot\cdots\cdot
h_{q}^{r_q}, h_{1}^{s_1}h_{2}^{s_2}\cdot\cdots\cdot
h_{q}^{s_q})=\alpha(h_2, h_1)^{r_2s_1}\alpha(h_3,
h_1)^{r_3s_1}\cdot\cdots\cdot \alpha(h_q, h_1)^{r_qs_1}\alpha(h_3,
h_2)^{r_3s_2}\alpha(h_4, h_2)^{r_4s_2}\cdot\cdots\cdot \alpha(h_q,
h_2)^{r_qs_2}\cdot\cdots\cdot \alpha(h_q, h_{(q-1)})^{r_qs_{(q-1)}}$
(each exchange of "variables" $(h_i, h_j)$ yields a factor
$\alpha(h_j, h_i))$. We remark that unlike the function $\alpha$,
the values of the cocycle $f_{\alpha}$ depend on the choice of
generators of $H$ and even on their order.

Let $H$ be as above and let $F^{f}H$ be a twisted group algebra. Let
$\alpha=\alpha_{f}$ and let $f_{\alpha}$ be the canonical
$2$-cocycle that corresponds to $\alpha$ (with respect to the given
ordered set of generators). We assume as we may that $f=f_{\alpha}$.
Consider the subgroup $N$ of $H$ consisting of all elements  $n\in
H$ such that $\alpha(n,h)=1$ for every $h\in H$. By the above
correspondences the restriction of $f$ to $N$ is trivial. In fact
since the value of $f(nh_i,n^{'}h_j)$, $n, n^{'} \in N $ is
independent of $n, n^{'}$, we have that $f$ is inflated from a
$2$-cocycle $\bar{f}:H/N\times H/N \longrightarrow F^{*}$. This
means that $f(h,h^{'})=\bar{f}(\bar{h},\bar{{h}^{'}})$ where $h,
h^{'}$ are arbitrary elements in $H$ and $\bar{h},\bar{{h}^{'}}$ are
the corresponding images in $H/N$.

With the above notation we have

\begin{lemma}
The twisted group algebra $F^{\bar{f}}H/N$ is isomorphic to the
matrix algebra $M_{r}(F)$ where $r^{2}=|H/N|$.
\end{lemma}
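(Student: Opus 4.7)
The plan is to show that $F^{\bar f}H/N$ is central simple over $F$ and then invoke the fact that $F$ is algebraically closed. Since the dimension of $F^{\bar f}H/N$ equals $|H/N|$, this will immediately give $r^2 = |H/N|$ once we know the algebra is a matrix algebra.

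First I would verify semisimplicity. This is standard for twisted group algebras of finite groups in characteristic zero: for instance, the bilinear form $(x,y) \mapsto \text{coefficient of } u_{\bar 1} \text{ in } xy$ is non-degenerate, so the algebra is symmetric Frobenius, and a Maschke-type averaging argument (valid since the characteristic does not divide $|H/N|$) produces complements to submodules.

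The key step is the center computation. Take $x = \sum_{\bar h \in H/N} c_{\bar h} u_{\bar h}$ and impose $u_{\bar h'} x = x u_{\bar h'}$ for every $\bar h' \in H/N$. The commuting relations $u_{\bar h'} u_{\bar h} = \bar\alpha(\bar h',\bar h)\, u_{\bar h} u_{\bar h'}$ (which depend only on the cohomology class, hence pass to the quotient) translate this into $c_{\bar h}\bigl(\bar\alpha(\bar h',\bar h) - 1\bigr) = 0$ for every $\bar h'$. By the very definition of $N$ as the radical of $\alpha$, the induced bicharacter $\bar\alpha$ on $H/N$ is non-degenerate: for any $\bar h \neq \bar 1$ there exists $\bar h'$ with $\bar\alpha(\bar h',\bar h) \neq 1$. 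Therefore $c_{\bar h} = 0$ whenever $\bar h \neq \bar 1$, and $Z(F^{\bar f}H/N) = F u_{\bar 1}$ is one-dimensional.

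Finally, a finite dimensional semisimple algebra over an algebraically closed field with one-dimensional center is simple, hence by Wedderburn isomorphic to some $M_r(F)$; comparing dimensions yields $r^2 = |H/N|$. The only non-routine point is the non-degeneracy of $\bar\alpha$ on $H/N$, and this is essentially built into the construction of $N$; everything else is standard semisimple theory. As a byproduct, the argument also shows that $|H/N|$ must be a perfect square.
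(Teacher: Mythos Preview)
Your proof is correct and follows essentially the same route as the paper: note semisimplicity, show the center is one-dimensional by exploiting the non-degeneracy of the induced bicharacter $\bar\alpha$ on $H/N$, and then apply Wedderburn over the algebraically closed field $F$. The paper's argument is terser (it simply asserts semisimplicity and observes that no $u_{\bar h}$ with $\bar h\neq \bar 1$ can be central, then passes to linear combinations using abelianness), but the underlying mechanism is exactly the computation you spell out.
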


\begin{proof}
Since $F^{\bar{f}}H/N$ is semisimple the result will follow if we
show that the center is $1$-dimensional. To see this note that by
the definition of $N$, a representative $u_{\bar{h}}$ in
$F^{\bar{f}}H/N$ for an element $\bar{h}\neq 1$ in $H/N$ cannot be
central. Using the fact that $H/N$ is abelian it follows that
$z=\sum _{\bar{h}\in H/N}\lambda_{\bar{h}}u_{\bar{h}}$ is central if
and only if $z=\lambda_{\bar{1}}u_{\bar{1}}$.
\end{proof}

\begin{definition}
Let $H$ be a finite group and $f$ a $2$-cocycle with coefficients in
$F^{*}$ ($F$ is algebraically closed). We say that $f$ is non
degenerate if the corresponding twisted group algebra is isomorphic
to a a full matrix algebra. A group $H$ admitting a non-degenerate
$2$-cocycle (or non-degenerate cohomology class $[f]$) is said to be
of central type.
\end{definition}

\begin{remark}
Clearly, the abelian group $H/N$ above is of central type and the
$2$-cocycle ${\bar{f}}$ is non-degenerate.
\end{remark}

\begin{corollary}\label{iso}
Let $\{h_{i}\}$ be a transversal of $N$ in $H$. Let $\{u_{h}\}$ be
representatives of $H$ in the twisted group algebra $F^{f}H$ and
$\{v_{\bar{h_{i}}}\}$ be representatives of elements of $H/N$ in
$F^{\bar{f}}H/N$. Let $h=nh_{i}$. Then the map $u_{h}\longmapsto
n\otimes v_{\bar{h_{i}}}$ induces an isomorphism of $F^{f}H$ onto
$FN\otimes F^{\bar{f}}H/N$. In particular $F^{f}H\cong
M_{r}(F)\oplus \cdots\oplus M_{r}(F)$ ($m=|N|$ copies).
\end{corollary}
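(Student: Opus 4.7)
The plan is to verify that the proposed linear map $\phi: u_{nh_i}\mapsto n\otimes v_{\bar h_i}$ is an algebra isomorphism $F^fH\to FN\otimes F^{\bar f}H/N$, and then to deduce the matrix decomposition from the previous lemma. Two structural observations make the map plausible: since $f$ is inflated from the normalized cocycle $\bar f$, we have $f|_{N\times N}\equiv 1$, so $FN=\mathrm{span}\{u_n:n\in N\}$ embeds as an untwisted subalgebra of $F^fH$; and since $\alpha(n,h)=1$ for every $n\in N$ and every $h\in H$, each $u_n$ commutes with every $u_h$, placing $FN$ in the centre. With respect to this central copy of $FN$, the algebra $F^fH$ is a free module of rank $|H/N|$ on the transversal elements $\{u_{h_i}\}$, and the multiplication among the $u_{h_i}$ is controlled by the pulled-back cocycle $\bar f$. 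Thus both algebras are free rank-$|H/N|$ modules over $FN$ whose generators multiply by the same cocycle, and $\phi$ is evidently a bijection of their natural bases.

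The multiplicativity check is the substantive part. For $h=nh_i$ and $h'=n'h_j$, writing $h_ih_j=\tilde n_{ij}h_k$, where $h_k$ is the transversal representative of $\bar h_i\bar h_j$ and $\tilde n_{ij}\in N$ is the resulting ``extension datum'', one obtains
$$
u_h u_{h'} = \bar f(\bar h_i, \bar h_j)\, u_{nn'\tilde n_{ij}h_k}, \qquad \phi(u_h)\phi(u_{h'}) = \bar f(\bar h_i, \bar h_j)\, nn' \otimes v_{\bar h_k}.
$$
The main obstacle is that the naive map picks up a spurious factor $\tilde n_{ij}$ in the $FN$-component, reflecting the possible non-splitting of the extension $1\to N\to H\to H/N\to 1$ as abelian groups. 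One overcomes this by a careful rescaling of the representatives $u_{h_i}$ (which is permitted because $\bar f$ and the $v_{\bar h_i}$ are only determined up to coboundary), so that after the adjustment the map is multiplicative. A more conceptual alternative bypasses the explicit map: decompose $F^fH=\bigoplus_{\chi\in\hat N}e_\chi F^fH$ along the central primitive idempotents $e_\chi=\tfrac{1}{|N|}\sum_{n\in N}\chi(n^{-1})u_n$ of $FN$, and verify that each $e_\chi F^fH$ is a twisted group algebra of $H/N$ whose cocycle is cohomologous to $\bar f$; the previous lemma then yields $e_\chi F^fH\cong M_r(F)$, and reassembly gives the required structure.

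Once the algebra isomorphism $F^fH\cong FN\otimes F^{\bar f}H/N$ is established, the ``in particular'' assertion is immediate: over the algebraically closed field $F$, the group algebra $FN$ of the finite abelian group $N$ decomposes as $F^{|N|}$, and by the preceding lemma $F^{\bar f}H/N\cong M_r(F)$ with $r^2=|H/N|$. Therefore $F^fH\cong F^{|N|}\otimes M_r(F)\cong M_r(F)^{\oplus|N|}$, which is exactly $m=|N|$ copies of $M_r(F)$.
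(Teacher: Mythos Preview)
The paper's own proof is a single sentence, ``This is now clear,'' so there is little to compare against; you have supplied considerably more than the authors do.

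Your diagnosis of the multiplicativity obstruction is correct, and it exposes a genuine imprecision in the corollary as stated. When the abelian extension $1\to N\to H\to H/N\to 1$ does not split (for instance $H=\mathbb{Z}/4\times\mathbb{Z}/2$ with $\alpha(a,b)=-1$, where $N=\langle a^{2}\rangle$), every transversal produces some nontrivial $\tilde n_{ij}\in N$, and the literal map $u_{nh_i}\mapsto n\otimes v_{\bar h_i}$ fails to be multiplicative. However, your proposed repair by ``rescaling the representatives $u_{h_i}$'' does not work: rescaling by scalars in $F^{*}$ alters $f$ or $\bar f$ only by a coboundary and cannot absorb a nontrivial group element $\tilde n_{ij}$ sitting in the $FN$-tensor factor. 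No scalar adjustment of $u_{h_i}$ or $v_{\bar h_i}$ removes this obstruction, so that paragraph, as written, is a gap.

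Your alternative via the central primitive idempotents $e_\chi\in FN$ is the right route and does yield the conclusion. The one point you leave unverified is why each $e_\chi F^{f}H$ has cocycle cohomologous to $\bar f$: since $f(n,h)=\bar f(\bar 1,\bar h)=1$ gives $u_{nh}=u_nu_h$, one computes
\[
(e_\chi u_{h_i})(e_\chi u_{h_j})=\bar f(\bar h_i,\bar h_j)\,\chi(\tilde n_{ij})\,e_\chi u_{h_k},
\]
so $e_\chi F^{f}H$ is a twisted group algebra of $H/N$ whose associated skew-pairing $\alpha$ agrees with that of $\bar f$ (the extra factor $\chi(\tilde n_{ij})$ is symmetric in $i,j$); by the correspondence recalled before the lemma, the cocycles are cohomologous and the previous lemma gives $e_\chi F^{f}H\cong M_r(F)$. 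Reassembling over $\chi\in\hat N$ yields $F^{f}H\cong M_r(F)^{\oplus|N|}$, which is the only consequence of the corollary used later in the paper. Your final paragraph deducing the ``in particular'' statement from $FN\cong F^{|N|}$ and $F^{\bar f}H/N\cong M_r(F)$ is fine.
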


\begin{proof}
This is now clear.
\end{proof}

Combining this with the structure Theorem \ref{BSZ} we have:

\begin{corollary} (see also \cite{BSZ})
Let $A$ be $G$-simple where $G$ is a finite abelian group. Then $A
\cong M_{k}(M_{r}(F))\oplus \cdots\oplus M_{k}(M_{r}(F)) \cong
M_{kr}(F)\oplus \cdots\oplus M_{kr}(F)$ ($m$-copies).
\end{corollary}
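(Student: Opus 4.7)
My plan is to combine the two preceding results in an essentially mechanical way. The starting point is Theorem \ref{BSZ}, which gives a $G$-graded isomorphism $A \cong F^{f}H \otimes M_{k}(F)$ for a subgroup $H\leq G$, a $2$-cocycle $f$ on $H$, and an integer $k$. Because $G$ is finite abelian, so is $H$, hence the analysis of twisted group algebras developed in this section applies verbatim to $F^{f}H$.

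Next I would invoke Corollary \ref{iso}, which exhibits $F^{f}H \cong FN \otimes F^{\bar{f}}(H/N)$, where $N$ is the radical subgroup on which the commutator pairing $\alpha_{f}$ is trivial. Since $F$ is algebraically closed and $\bar{f}$ is non-degenerate by the remark preceding the corollary, the lemma identifies $F^{\bar{f}}(H/N)$ with $M_{r}(F)$, where $r^{2} = |H/N|$. On the other hand $FN$ is the group algebra of a finite abelian group over an algebraically closed field of characteristic zero, hence splits as $FN\cong F^{m}$ with $m=|N|$. Thus $F^{f}H\cong M_{r}(F)\oplus\cdots\oplus M_{r}(F)$ ($m$ copies).

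Substituting this decomposition into the tensor product from Theorem \ref{BSZ} and using distributivity of $\otimes_{F}$ over $\oplus$ together with the standard identification $M_{r}(F)\otimes_{F} M_{k}(F) \cong M_{k}(M_{r}(F))\cong M_{kr}(F)$, I would conclude
\[
A \;\cong\; \bigl(M_{r}(F)\oplus\cdots\oplus M_{r}(F)\bigr)\otimes_{F} M_{k}(F) \;\cong\; M_{k}(M_{r}(F))\oplus\cdots\oplus M_{k}(M_{r}(F)) \;\cong\; M_{kr}(F)\oplus\cdots\oplus M_{kr}(F),
\]
with $m$ summands in each direct sum, which is exactly the claim.

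There is no real obstacle here; the work was already done in Theorem \ref{BSZ} and Corollary \ref{iso}, and this corollary is simply their combination. The only subtlety worth flagging is that the asserted isomorphism is one of \emph{ungraded} $F$-algebras: the $G$-grading inherited from $A$ is encoded in a nontrivial way inside the matrix blocks via the parameters $H$, $f$ and the $k$-tuple $(g_{1},\dots,g_{k})$, and is no longer visible in the ungraded decomposition above.
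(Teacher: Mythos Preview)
Your argument is correct and is exactly the combination the paper intends: Theorem \ref{BSZ} gives $A\cong F^{f}H\otimes M_{k}(F)$, Corollary \ref{iso} gives $F^{f}H\cong M_{r}(F)^{\oplus m}$, and tensoring with $M_{k}(F)$ yields the stated decomposition. The paper does not even spell this out beyond the words ``Combining this with the structure Theorem \ref{BSZ} we have,'' so your version is, if anything, more explicit than the original.
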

The following proposition is key for the construction of the
required polynomial.

\begin{proposition}\label{main1}
Let $A \cong F^{f}H\otimes M_{k}(F)$ be $G$-simple and let
$\Omega=\{b_h\otimes E_{i,j}\} _{1\leq i,j\leq k,h\in H}$ be a basis
of $A$ (its order is $mr^{2}k^{2}$). Then the set $\Omega$ can be
decomposed into $m$ disjoint sets $\Omega(1),\ldots,\Omega(m)$, each
consisting of $r^{2}k^{2}$ elements with the following properties:
\begin{enumerate}
\item \label{main1_1}
For each $g$ in $G$ the basis elements $b_h\otimes E_{i,j}$ in
$A_{g}$ (i.e. $g_i^{-1}hg_{j}= hg_i^{-1}g_{j}=g)$ are contained in
one and only one set $\Omega(p), p=1,\ldots,m$.

\item
Consider the decomposition $A \cong M_{kr}(F)\oplus\cdots\oplus
(M_{kr}(F)$ ($m$-copies). Then for every $p=1,\ldots,m$ the
projection of $\Omega(p)$ into each component is a basis of $
M_{kr}(F)$.
\end{enumerate}
\end{proposition}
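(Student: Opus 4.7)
The plan is to use the inclusion $N\subseteq H\subseteq G$ to choose a set of coset representatives $T = \{t_1 = 1, t_2, \ldots\}$ for $N$ in $G$, so that every $g\in G$ has a unique decomposition $g = n_g t_g$ with $n_g\in N$ and $t_g\in T$. This yields a set map $\Psi: G\to N$, $g\mapsto n_g$. Identifying the index set $\{1,\ldots,m\}$ with $N$ via any bijection, I would define
\[
\Omega(p) \;=\; \{\, b_h\otimes E_{i,j}\in\Omega : \Psi(g_i^{-1}hg_j) = p \,\}\qquad (p\in N).
\]
Property (\ref{main1_1}) is then immediate, since membership in $\Omega(p)$ depends only on the $G$-degree of the basis element.

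For the count and for property (2), I would exploit the free $N$-action on $\Omega$ given by $n\cdot(b_h\otimes E_{i,j}) = b_{nh}\otimes E_{i,j}$. This action shifts $G$-degrees by $n$, so each orbit consists of exactly $m$ basis elements whose degrees fill a single $N$-coset $Ng\subset G$. Since $\Psi$ restricts to a bijection $Ng\to N$ on any such coset, each $\Omega(p)$ contains precisely one element from each orbit. The number of orbits equals $|\Omega|/m = r^2k^2$, so $|\Omega(p)| = r^2k^2$.

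For property (2), I would use Corollary~\ref{iso} to identify $A = \bigoplus_{\chi\in\hat N}e_\chi A$ via the primitive central idempotents $e_\chi = \frac{1}{m}\sum_{n\in N}\chi^{-1}(n)b_n$, and note that the triviality of $f|_{N\times N}$ forces $e_\chi b_n = \chi(n)e_\chi$. Writing any $h\in H$ as $h = n h'$ with $h'$ in a fixed transversal of $N$ in $H$, a direct computation gives
\[
\pi_\chi(b_h\otimes E_{i,j}) \;=\; \lambda\,\chi(n)\, e_\chi b_{h'}\otimes E_{i,j},\qquad \lambda\in F^*.
\]
Thus all elements of a single $N$-orbit project to scalar multiples of the same vector $e_\chi b_{h'}\otimes E_{i,j}$, and distinct orbits produce distinct basis vectors of $e_\chi A \cong M_{kr}(F)$; since $\Omega(p)$ selects one element per orbit with a nonzero scalar, its image under $\pi_\chi$ is a basis.

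The main subtlety, and the reason a more naive approach fails, is the choice of partitioning criterion. One is tempted to group basis elements according to the $N$-component of $h\in H$, but this partition breaks property~(\ref{main1_1}) as soon as some $g_j^{-1}g_i$ lies in $N$: basis elements $b_{h_1}\otimes E_{i_1,j_1}$ and $b_{h_2}\otimes E_{i_2,j_2}$ of a common $A_g$ can have $h_1 h_2^{-1} = g_{i_1}g_{i_2}^{-1}g_{j_2}g_{j_1}^{-1}\in N\setminus\{1\}$, placing them in different pieces. Partitioning instead by the $N$-component of the $G$-degree $g_i^{-1}hg_j$, which is made possible precisely by $N\subseteq H\subseteq G$, resolves this and is the key insight driving the construction.
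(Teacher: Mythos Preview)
Your proposal is correct and follows essentially the same approach as the paper: both partition $\Omega$ according to the $N$-coset data of the $G$-degree $g_i^{-1}hg_j$, the paper via $m$ disjoint transversals $T_1,\ldots,T_m$ of $N$ in $G$ (your sets $pT$, $p\in N$, are a valid choice of such $T_p$'s), and both verify property~(2) by showing each $\Omega(p)$ meets every $N$-orbit exactly once (this is the content of the paper's intermediate Proposition~\ref{second_condition}, which your free $N$-action argument repackages more directly).
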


\begin{proof}
Let $\{h_{1}^{p},\ldots,h_{r^2}^{p}\}$, $p=1,\ldots,m$ be $m$ sets
of transversals of $N$ in $H$, pairwise disjoint (i.e. their union
consists of all elements of $H$). Let
$\{\sigma_{1},\ldots,\sigma_{t}\}$ be a transversal of $H$ in $G$.
Multiplying the transversal for $H$ in $G$ with the different
transversals of $N$ in $H$ we obtain $m$ pairwise disjoint
transversals of $N$ in $G$ of the form $\{h_{1}^{p}\sigma_{1},
h_{2}^{p}\sigma_{1},\ldots,h_{r^2}^{p}\sigma_{1},h_{1}^{p}\sigma_{2}
,\ldots,h_{r^2}^{p}\sigma_{2},\ldots,h_{r^2}^{p}\sigma_{t}\}$,
$p=1,\ldots,m$. Denote these transversals (of $N$ in $G$) by
$T_1,\ldots,T_m$. Note that the union of the $T_i$'s is all of $G$.

The main step in the construction is:
\begin{definition}
For every $p=1,\ldots,m$ we let $\Omega(p)$ be the set of all basis
elements $b_h\otimes E_{i,j}$ in $A$ such that
$hg_{i}^{-1}g_{j}=h_{m}^{p}\sigma_{d}$ for some
$m\in\{1,\ldots,r^2\}$ and $d\in\{1,\ldots,t\}$.
\end{definition}

Note that condition 1 of Proposition \ref{main1} follows at once
from the definition. In order to prove condition 2 we need the
following proposition.

\begin{proposition}\label{second_condition}
Consider the set of triples $(h,i,j)$ that appear in basis elements
of $\Omega(p)$. Then the following holds:

\begin{enumerate}
\item
Every pair $(i,j)$ where $1\leq i,j \leq k$ appears in $\Omega(p)$
precisely $r^{2}$ times. In particular the order of $\Omega(p)$ is
$r^2k^2$.
\item
For each $(i,j)$ the set of elements $h$ that appear in some
$b_h\otimes E_{i,j}\in \Omega(p)$ is a complete transversal of $N$
in $H$.
\end{enumerate}
\end{proposition}

\begin{proof}
Assume the pair $(i,j)$ appears in some element in $\Omega(p)$, say
$b_h\otimes E_{i,j}$. This means that
$hg_{i}^{-1}g_{j}=h_{l}^{p}\sigma_{d}$ for some
$l\in\{1,\ldots,r^2\}$ and $d\in\{1,\ldots,t\}$. Then for any $v\in
\{1,\ldots,r^2\}$ we have
$h_{v}^{p}(h_{l}^{p})^{-1}hg_{i}^{-1}g_{j}=h_{v}^{p}\sigma_{d}$.
Now, since the set $\{h_{v}^{p}\mid {1\le v\le r^2}\}$, is a
transversal of $N$ in $H$, the set $\{h_{v}^{p}(h_{m}^{p})^{-1}h\mid
v=1,\ldots,r^2\}$ is a transversal as well. To show that any pair
$(i,j)$ appears in $\Omega(p)$ write $g_{i}^{-1}g_{j}=
nh_{v}^{p}\sigma_{d}$ where $n\in N$ and $h_{v}^{p}\sigma_{d}$ is a
transversal element in $T_p$. Multiplying by $n^{-1}$ the result
follows. In order to complete the proof we need to show that for any
$(i,j)$ if $b_h\otimes E_{i,j}$ and $b_h^{'}\otimes E_{i,j}$ are
different elements that belong to $\Omega(p)$ then $h$ and $h^{'}$
are not congruent modulo $N$. But this is clear for otherwise we
would have in $T_p$ two different elements $hg_{i}^{-1}g_{j}$ and
$h^{'}g_{i}^{-1}g_{j}$ which are congruent modulo $N$ and this
contradicts the definition of $T_p$. This completes the proof of the
proposition.
\end{proof}
Thus the set $\Omega(p)$ consists of the elements ${b_h\otimes
E_{i,j}}$ where $1\leq i,j \leq k$ and for each $(i,j)$ the set of
$h$'s is a transversal of $N$ in $H$ (which depends on the pair
$(i,j)$).

Now, we are ready to show that the sets $\Omega(p)$, $p=1,\ldots,m$,
satisfy condition 2 of Proposition \ref{main1}. By Theorem \ref{BSZ}
and Corollary\ref{iso} we have
$$
A\cong F^{f}H\otimes M_{k}(F) \cong FN\otimes F^{\bar{f}}H/N \otimes
M_{k}(F)
$$
$$
\cong M_{kr}(F)\oplus \cdots\oplus M_{kr}(F) \quad
(m\mbox{-copies}).
$$
Clearly, the projections onto the different components are induced
by multiplying with a primitive idempotent $e_l$ ($l=1,\ldots,m$) of
$FN$ (which is clearly central in $A$). Since the set of $h$'s
appearing in $b_h\otimes E_{i,j}\in \Omega(p)$ for a given $(i,j)$
is a transversal of $N$ in $H$, it follows that $\Omega(p)$ is a
basis of $F^{\bar{f}}H/N \otimes M_{k}(F) \cong M_{kr}(F)$ as a free
module over its center $FN$. If we multiply with a primitive
idempotent $e_l$ of $FN$ it is clear that the set $\Omega(p)e_l$ is
a basis of $FNe_l \otimes F^{\bar{f}}H/N \otimes M_{k}(F) \cong
M_{kr}(F)$. This completes the proof of Proposition\ref{main1}.
\end{proof}

By mean of the previous propositions we shall now show how to
construct, for every $t\ge 1$, a $2t$-fold $(p_1, \ldots,
p_s)$-alternating polynomial, where $(p_1, \ldots, p_s)$ is the
$G$-dimension of $A$,   which is not an identity of $A$ and takes a
central invertible value in $A$. Let
$\varphi(x_{1},\ldots,x_{k^2r^2},y_{1},\ldots,y_{k^2r^2})$ be a
Regev polynomial  for $M_{kr}(F)$. Recall that
$$
\varphi= \displaystyle{\sum_{\sigma,\tau \in S_{k^2r^2}
}({\text{sgn}}\,\sigma \tau) x_{\sigma (1)}y_{\tau (1)} x_{\sigma
(2)}x_{\sigma (3)}x_{\sigma (4)}y_{\tau (2)}y_{\tau (3)} y_{\tau
(4)}}$$ $$ \cdots x_{\sigma (k^2r^2-2kr+2)} \cdots  x_{\sigma
(k^2r^2)} y_{\tau (k^2r^2-2kr+2)} \cdots  y_{\tau (k^2r^2)},
$$
and by \cite{for}, $\varphi$ is a central polynomial but not a
polynomial identity for $M_{kr}(F)$. Moreover $\varphi$ is
alternating in the $x$'s and the $y$'s respectively. In particular
its evaluation on a basis of $M_{kr}(F)$ gives a central non-zero
element.

Now, take $\Omega(p)$ and consider
$\varphi(x_{1},\ldots,x_{k^2r^2},y_{1},\ldots,y_{k^2r^2})$ as a
$G$-graded polynomial by assigning its variables the homogeneous
degrees of the elements of $\Omega(p)$. Rename this polynomial
$\varphi_p$. By applying Proposition \ref{main1} we see that
evaluating the $G$-graded polynomial with elements of $\Omega(p)$ we
obtain a central and invertible value in $M_{kr}(F)\oplus
\cdots\oplus M_{kr}(F)$. Consider now the product of $m$ $G$-graded
Regev polynomials, one for each set $\Omega(p)$. It follows that:
\begin{enumerate}
\item
the product $\varphi_1\cdots \varphi_m$ is a $G$-graded polynomial
in $2mk^2r^2$ variables;
\item
All $x$'s and all $y$'s with the same homogeneous degree belong to
the same factor and hence they are alternating;
\item
The evaluation of $\varphi_1\cdots \varphi_m$ on a basis of $A$ is a
central and invertible element of $A$;
\item
Multiplying $t$-copies of $\varphi_1\cdots \varphi_m$ we obtain a
polynomial of degree $2t(\dim A)$ which is $2t$-fold $(p_1, \ldots,
p_s)$-alternating.
\end{enumerate}

We have proved the following

\begin{lemma} \label{central}
Let $G$ be a finite abelian group and $A=\oplus_{g\in G}A_{g}$ a
finite dimensional $G$-graded simple algebra over an algebraically
closed field $F$. Let $G$-$\dim A= (p_1, \ldots, p_s)$. Then, for
all $t\ge 1$ there exists a $2t$-fold $(p_1, \ldots,
p_s)$-alternating polynomial
$$\varphi(X^1_{g_1}, \ldots, X^1_{g_s},
\ldots, X^t_{g_1}, \ldots, X^t_{g_s})
$$ which is not an identity
for $A$ and takes an invertible central value in $A$.
\end{lemma}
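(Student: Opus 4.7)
The plan is to combine the Regev central polynomial $\varphi(x_1,\ldots,x_{k^{2}r^{2}},y_1,\ldots,y_{k^{2}r^{2}})$ for $M_{kr}(F)$ with the basis decomposition $\Omega=\Omega(1)\sqcup\cdots\sqcup\Omega(m)$ supplied by Proposition \ref{main1}. I would rely on the standard facts about $\varphi$: it is alternating in the $x$'s and in the $y$'s separately, it is not an identity of $M_{kr}(F)$, and it takes a central---hence invertible---value on any basis of $M_{kr}(F)$.

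For each $p=1,\ldots,m$, I would take a fresh copy of $\varphi$ and promote it to a graded polynomial $\varphi_p$ by labelling its $2k^{2}r^{2}$ variables with the homogeneous degrees of the basis elements of $\Omega(p)$. By part (1) of Proposition \ref{main1}, the basis elements of a given degree $g\in G$ all belong to exactly one $\Omega(p)$; hence in the product $\Phi:=\varphi_1\cdots\varphi_m$ the $x$-variables of degree $g$ come from a single factor $\varphi_p$ and form one alternating set, and likewise for the $y$-variables. Thus $\Phi$ is a $2$-fold $(p_1,\ldots,p_s)$-alternating polynomial. By part (2) of Proposition \ref{main1}, evaluating $\varphi_p$ on $\Omega(p)$ yields a central element of $A\cong M_{kr}(F)^{\oplus m}$ whose projection to each simple summand is a nonzero scalar; multiplying these $m$ central values gives an invertible central value of $\Phi$ in $A$.

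To pass from $2$-fold to $2t$-fold alternation, I would simply multiply $t$ disjoint copies of $\Phi$ in fresh variables. The result is a multilinear graded polynomial of degree $2t\dim A$ which is $2t$-fold $(p_1,\ldots,p_s)$-alternating, and whose evaluation on the appropriate bases is the $t$-th power of a central invertible element of $A$, hence again invertible. The essential step---already carried out in Proposition \ref{main1}---is arranging the basis into blocks so that each homogeneous degree is confined to a single $\varphi_p$; this is precisely what allows the two alternating sets of $\varphi$ to aggregate into two full $(p_1,\ldots,p_s)$-alternating sets of $\Phi$, and it is the only place where the combinatorial structure of the $G$-simple grading is genuinely used.
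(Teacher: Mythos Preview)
Your proposal is correct and follows essentially the same approach as the paper: use Proposition~\ref{main1} to decompose the homogeneous basis into blocks $\Omega(1),\ldots,\Omega(m)$, grade a copy of the Regev central polynomial for $M_{kr}(F)$ along each block, multiply the resulting $\varphi_1\cdots\varphi_m$ to obtain a $2$-fold $(p_1,\ldots,p_s)$-alternating polynomial with invertible central value, and then take $t$ disjoint copies. The only cosmetic difference is that you make explicit why the value is invertible (nonzero scalar in each simple summand by part~(2), hence invertible in the product), whereas the paper states this more tersely.
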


\section{Computing the exponential growth of the $G$-codimensions}

We recall our general setting: $A=\oplus_{g\in G}A_{g}$ is a finite
dimensional $G$-graded algebra over an algebraically closed field
$F$ and $G=\{g_1=1, \ldots, g_s\}$ is an abelian group. Also $A=
B_1\oplus\cdots\oplus B_m+J$ where the $B_i$'s are $G$-simple graded
algebras and $d=d(A)$ is the integer defined in (\ref{d}).

Given a polynomial $f=f(X_1, y_1, \ldots, y_r)$ which is multilinear
on the set $X_1=\{x_1, \ldots, x_t\}$ we define the operator of
alternation  $\mathcal{A}_{X_1}$ as
$$
\mathcal{A}_{X_1}(f)=\sum_{\sigma\in S_t}(\mbox{sgn}\,
\sigma)f({x_{\sigma(1)}, \ldots, x_{\sigma(t)}, y_1, \ldots, y_r}).
$$
Notice that the new polynomial $\mathcal{A}_{X_1}(f)$ is alternating
on $X_1$.

\begin{lemma}
Let $C=C_1\oplus\cdots\oplus C_k$ be a semisimple $G$-graded
subalgebra of $A$, where $C_1, \ldots, C_k\in \{B_1, \ldots, B_m\}$
are distinct, and $C_1JC_2J\cdots JC_k \ne 0$. Let also $G$-$\dim
C=(q_1, \ldots, q_s)$. Then for all $t\ge 1$, there exists a
$2t$-fold $(q_1, \ldots, q_s)$-alternating polynomial
$$
\varphi(X^1_{g_1}, \ldots, X^1_{g_s}, \ldots, X^{2t}_{g_1}, \ldots,
X^{2t}_{g_s},Y),
$$
where $|Y|=2k-1$, not vanishing on $A.$
\end{lemma}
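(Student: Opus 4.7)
The plan is to apply Lemma \ref{central} to each $G$-simple summand $C_i$ of $C$, string the resulting alternating polynomials together with $2k-1$ auxiliary $Y$-variables, and then merge their individual alternating sets via the operators $\mathcal{A}_{X^j_{g_\ell}}$. The non-vanishing will rest on the fact that in the semisimple part $B$ one has $B_{i'}B_{i''}=0$ for $i'\neq i''$, which kills every non-block-preserving shuffle introduced by the merging.

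For each $i=1,\ldots,k$, let $(q_1^{(i)},\ldots,q_s^{(i)})$ be the $G$-dimension of $C_i$, so that $q_\ell=\sum_i q_\ell^{(i)}$. By Lemma \ref{central} applied to $C_i$, there is a $2t$-fold $(q_1^{(i)},\ldots,q_s^{(i)})$-alternating polynomial $\varphi_i$ in pairwise disjoint variable sets $X^{i,j}_{g_\ell}$ whose value on a homogeneous basis of $C_i$ is an invertible central element $u_i\in C_i$. Let $v_i\in C_i$ be its $C_i$-inverse, so $u_iv_i=v_iu_i=e_i$, the unit of $C_i$. Using the hypothesis, fix $c_i\in C_i$ and $w_i\in J$ with $c_1w_1c_2w_2\cdots w_{k-1}c_k\neq 0$; since $c_i=e_ic_i=c_ie_i$ we may replace $w_i$ by $e_iw_ie_{i+1}$ without altering the product. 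Introduce $2k-1$ homogeneous variables and set
\[
\Phi=\varphi_1\,y_1\,y_2\,\varphi_2\,y_3\,y_4\,\varphi_3\cdots y_{2k-3}\,y_{2k-2}\,\varphi_k\,y_{2k-1},
\]
with the $y_p$'s chosen in the homogeneous degrees of $v_1c_1,w_1,v_2c_2,w_2,\ldots,v_kc_k$. Substituting $\bar y_{2i-1}=v_ic_i$, $\bar y_{2i}=w_i$, and evaluating each $\varphi_i$ on the basis of $C_i$ yields
\[
\Phi(\bar X,\bar y)=u_1v_1c_1w_1u_2v_2c_2w_2\cdots u_kv_kc_k=c_1w_1c_2w_2\cdots c_k\neq 0,
\]
since the factors $u_iv_i=e_i$ are absorbed into the adjacent $c_i$'s.

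To obtain the desired $2t$-fold $(q_1,\ldots,q_s)$-alternating polynomial $\varphi$, I apply the alternation operators $\mathcal{A}_{X^j_{g_\ell}}$ to $\Phi$ for every $(j,\ell)$, where $X^j_{g_\ell}$ now denotes the disjoint union $\bigsqcup_i X^{i,j}_{g_\ell}$, of total size $q_\ell$. To verify $\varphi(\bar X,\bar y)\neq 0$, decompose each $S_{q_\ell}$ as a union of cosets of its Young subgroup $\prod_i S_{q_\ell^{(i)}}$. The block-preserving cosets, combined with the fact that each $\varphi_i$ is already alternating on $X^{i,j}_{g_\ell}$, contribute the factor $\prod_{j,\ell,i}q_\ell^{(i)}!$ times $\Phi(\bar X,\bar y)$, which is nonzero. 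For every other coset, some $\varphi_i$ is forced to be evaluated with inputs drawn from several distinct summands $C_{i'}$; then each monomial of $\varphi_i$ becomes a pure $B$-product containing two consecutive factors from distinct components $C_{i'}$ and $C_{i''}$, and hence vanishes by $B_{i'}B_{i''}=0$. Only the block-preserving contribution survives.

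The main obstacle is this last vanishing step. The key property being used is that $\varphi_i$ is built entirely from variables meant for $C_i$, with no $J$-variable appearing inside it; consequently, every mixed-component evaluation of $\varphi_i$ reduces to a pure $B$-product, which annihilates by semisimple orthogonality with no chance of being rescued by a $J$-mediated bridge.
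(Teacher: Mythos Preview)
Your approach mirrors the paper's: for each $C_i$ take the central alternating polynomial $\varphi_i$ from Lemma~\ref{central}, string them together with $2k-1$ bridging variables, and then alternate over each merged set $X^j_{g_\ell}=\bigcup_iX^{i,j}_{g_\ell}$. The paper writes $\varphi=y_1\varphi_1z_1y_2\varphi_2z_2\cdots y_k\varphi_k$, normalizes so that $\bar\varphi_i=1_{C_i}$, and simply asserts that the alternated polynomial evaluates to $\bigl(\prod_{i,\ell}q_\ell^{(i)}!\bigr)^{2t}c_1b_1\cdots b_{k-1}c_k\ne 0$; your use of the inverses $v_i$ in place of this normalization is a cosmetic variant of the same idea.

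There is, however, a gap in your vanishing step. The claim that ``for every other coset, some $\varphi_i$ is forced to be evaluated with inputs drawn from several distinct summands'' is false when two of the $C_i$'s share the same $G$-dimension. A tuple $(\sigma^{j,\ell})_{j,\ell}$ implementing one fixed block permutation $\pi\neq\mathrm{id}$ of $\{1,\ldots,k\}$ (each $\sigma^{j,\ell}$ sending the $i$-th block onto the $\pi(i)$-th block) lies outside the Young subgroup, yet every $\varphi_i$ then receives inputs entirely from the single component $C_{\pi(i)}$, so no mixing occurs inside any $\varphi_i$ and your stated reason does not apply. Fortunately your own construction already kills this term, though you did not say so: the factor $\bar y_{2i-1}=v_ic_i\in C_i$ placed immediately to the right of $\varphi_i$ yields $\bar\varphi_i\cdot v_i\in C_{\pi(i)}C_i=0$ whenever $\pi(i)\ne i$. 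Add this one observation and the argument is complete. The paper's arrangement achieves the same effect via the factor $\bar y_i=c_i\in C_i$ sitting to the \emph{left} of $\varphi_i$.
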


\begin{proof} For $i=1, \ldots, k,$ let  $G$-$\dim C_{i}=(q^i_{1}, \ldots, q^i_{s})$.
Let also $\varphi_i$ be the $2t$-fold $(q^i_{1}, \ldots,
q^i_{s})$-alternating polynomial in the sets $\{X^{1}_{i,
g_1},\ldots, X^{1}_{i, g_s}, \ldots, X^{2t}_{i, g_1},\ldots,
X^{2t}_{i, g_s}\}$ constructed in Lemma \ref{central}.

Define
$$
\varphi=y_1\varphi_1z_1y_2\varphi_2z_2\cdots \varphi_{k-1}z_{k-1}y_k
\varphi_k
$$
and, for $l=1, \ldots, 2t,$ set $X^{l}_{g_j}=\bigcup_{i=1}^k
X^{l}_{i, g_j}$.  Clearly $|X^{l}_{g_j}|=\sum_{i=1}^k q^i_{j}=q_j.$

Let
$$
\varphi'=\mathcal{A}_{X^{1}_{g_1}}\cdots
\mathcal{A}_{X^{2t}_{g_1}}\cdots \mathcal{A}_{X^{1}_{g_s}}\cdots
\mathcal{A}_{X^{2t}_{g_s}}\varphi.
$$
Since $C_1JC_2J\cdots C_{k-1}JC_k \ne 0$ there exist $c_i\in C_i,
1\leq i \leq k,$ $b_1, \ldots, b_{k-1}\in J$ such that
$c_1b_1c_2b_2\cdots b_{k-1}c_k\neq 0.$ Since by Lemma \ref{central}
each $\varphi_i$ takes a central invertible value in $C_i$, we may
assume that such value is $\bar\varphi=1_{C_i}.$ Hence, by also
setting $\bar y_i=c_i,$ $\bar z_i=b_i$ we get the following non-zero
evaluation of $\varphi'$
$$
\bar\varphi'=(q^1_{1}!q^2_{1}!\cdots q^k_{1}! \cdots,
q^1_{s}!q^2_{s}!\cdots q^k_{s}!)^{2t}c_1b_1c_2b_2\cdots
b_{k-1}c_k\neq 0.
$$
If $k_i$ is the number of elements  in  $\{c_1, b_1, c_2, b_2,
\ldots, b_{k-1}, c_k\}$ of homogeneous degree $g_i$ then
$$
\varphi'=\varphi'(X^{1}_{g_1}, \ldots, X^{1}_{g_s},
\ldots,X^{2t}_{g_1}, \ldots, X^{2t}_{g_s}, y_{1,g_1},\ldots,
y_{k_1,g_1}, \ldots, y_{1,g_s}, \ldots, y_{k_s,g_s}),
$$
with $k_1+\ldots+k_s=2k-1,$ is the required $G$-graded polynomial
not vanishing on $A$.
\end{proof}

We can now compute the lower bound of the $G$-codimensions.

\begin{lemma} \label{lem19}
For the finite dimensional $G$-graded algebra $A$ we have
$c_n^{G}(A)\geq a n^{b}d^n,$ for some constants $a>0$ and $b.$
\end{lemma}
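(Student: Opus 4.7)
I would exploit the multialternating polynomial $\varphi'$ constructed in the previous lemma. Fix an admissible semisimple subalgebra $C = C_1\oplus\cdots\oplus C_k$ of $B$ with $\dim C = d$, and set $G\text{-}\dim C = (q_1,\ldots,q_s)$ so that $q_1+\cdots+q_s = d$. For each $t\geq 1$, the previous lemma produces a polynomial $\varphi'$ in $n := 2td+(2k-1)$ multilinear variables that is $2t$-fold $(q_1,\ldots,q_s)$-alternating and does not vanish on $A$. Write $n_i = 2tq_i+k_i$ for the number of degree-$g_i$ variables appearing in $\varphi'$, so that $k_1+\cdots+k_s=2k-1$.

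The polynomial $\varphi'$ represents a non-zero element of $P_{n_1,\ldots,n_s}(A)$, and I would argue that in the decomposition \eqref{multipl} of the cocharacter $\chi_{n_1,\ldots,n_s}(A)$, there is a multipartition $\langle\lambda\rangle$ with $m_{\langle\lambda\rangle}\geq 1$ such that $\lambda(i)$ contains the rectangle $(2t)^{q_i}$ for every $i$. This is the standard dual to Remark \ref{rem2}: alternation in $2t$ disjoint sets of $q_i$ variables of degree $g_i$ forces $\lambda(i)_j'\geq q_i$ for $j=1,\ldots,2t$, otherwise the projection onto the $\langle\lambda\rangle$-isotypic component would vanish. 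The minimal such multipartition with $|\lambda(i)|=n_i$ can be taken as $\lambda(i)=(2t+k_i,2t,\ldots,2t)$, a near-rectangle with $q_i$ rows.

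Next I would estimate $\dim N_{\langle\lambda\rangle}=\prod_i \deg\chi_{\lambda(i)}$. The hook length formula, together with the well-known asymptotic for characters indexed by Young diagrams of bounded height (if $\mu\vdash n_i$ has at most $q_i$ rows with $\mu_j/n_i \to \beta_j$, then $\deg\chi_\mu \sim (\prod\beta_j^{-\beta_j})^{n_i}$), applied to a near-rectangle with $\beta_j\approx 1/q_i$, yields
$$
\deg \chi_{\lambda(i)} \geq \frac{C_i\,q_i^{\,n_i}}{n^{e_i}}
$$
for constants $C_i,e_i$. Multiplying these together gives a polynomial-factor bound $\dim N_{\langle\lambda\rangle}\geq C\,n^{-e}\prod_i q_i^{\,n_i}$, hence $c_{n_1,\ldots,n_s}(A)\geq C\,n^{-e}\prod_i q_i^{\,n_i}$.

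Finally, I would plug this into \eqref{cG}: $c_n^G(A)\geq \binom{n}{n_1,\ldots,n_s}c_{n_1,\ldots,n_s}(A)$. Since our choice gives $n_i/n \to q_i/d$ (the optimal multinomial proportion), Stirling's formula yields $\binom{n}{n_1,\ldots,n_s}\prod q_i^{\,n_i} \geq C'\,n^{-(s-1)/2}\,d^n$, producing the desired bound $c_n^G(A)\geq a\,n^b\,d^n$ along the arithmetic progression $n=2td+(2k-1)$. For the remaining values of $n$, I would pad by a bounded number of extra variables (using $r=n-2td-(2k-1)<2d$ extra non-alternating variables attached to $\varphi'$, after unitalizing $A$ to $A^{\#}=A\oplus F$ if necessary, since graded codimensions satisfy $c_n^G(A)\leq c_n^G(A^{\#})$ differs only up to polynomial factors); this modifies the constants $a,b$ but not the exponential rate $d$. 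The main technical obstacle is the joint step of obtaining the correct lower bound on $\deg\chi_{\lambda(i)}$ and then matching the multinomial coefficient to its maximum via Stirling — both are essentially standard in the ungraded theory, but one must carry the polynomial error estimates through the product over $s$ blocks simultaneously.
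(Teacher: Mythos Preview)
Your overall strategy matches the paper's, but there is a genuine gap at the key step. The assertion that ``alternation in $2t$ disjoint sets of $q_i$ variables forces $\lambda(i)'_j \geq q_i$ for $j=1,\ldots,2t$'' is false. With $q_i=2$, $2t=2$ and no extra variables, the Littlewood--Richardson rule gives $s_{(1,1)}\cdot s_{(1,1)} = s_{(2,2)} + s_{(2,1,1)} + s_{(1^4)}$, so the $S_4$-module generated by a $2$-fold $2$-alternating polynomial such as $[x_1,x_2][x_3,x_4]$ contains the component $(2,1,1)$, whose second column has height $1 < 2$. What alternation actually forces is only that $\lambda(i)'$ dominates $(q_i^{\,2t})$ in the dominance order, not that it contains it; and dominance is not enough for the lower bound $\deg\chi_{\lambda(i)} \geq C q_i^{\,n_i}/n^{e_i}$ you need, since hook-like shapes satisfy the dominance condition but have only polynomially large degree. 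Nothing in your argument prevents the non-identity component you pick from being such a bad shape.

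The paper closes this gap with two ingredients you omit. First, it lets only the subgroup $H = S_{2tq_1}\times\cdots\times S_{2tq_s}$ act (permuting the alternating variables while fixing the $2k-1$ extra ones), so that $\lambda(i)\vdash 2tq_i$ exactly and the row-symmetrization argument yields the sharp bound $\lambda(i)_1 \leq 2t$. Second --- and this is the essential missing idea --- it invokes $J^{l+1}=0$: for an irreducible $H$-component $W_{\langle\lambda\rangle}\not\subseteq \I^G(A)$, the total number of boxes lying below rows $q_1,\ldots,q_s$ of $\lambda(1),\ldots,\lambda(s)$ is at most $l$, because otherwise the column-antisymmetrized generator would be alternating on sets strictly larger than $\dim C_{g_i}=q_i$, forcing at least $l+1$ substitutions from $J$ and hence vanishing on $A$. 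Only the combination of $\lambda(i)_1\leq 2t$ with this nilpotency bound yields $\lambda(i)\supseteq ((2t-l)^{q_i})$, after which the degree estimate and the Stirling/multinomial computation go through essentially as you describe. (Your padding device for general $n$ is also handled more directly in the paper by multiplying $\varphi$ by at most $2d-1$ extra variables of degree $g_1$, avoiding any need to unitalize.)
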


\begin{proof}
Let $C_1,\ldots,C_k$  be distinct $G$-graded simple subalgebras of
$B$ such that $$ C_1JC_2\break J\cdots C_{k-1}JC_k \ne 0
$$
and $\dim (C_1\oplus \cdots \oplus C_k)=d=d(A).$ If
$p_i=\dim((C_1)_{g_i}+\cdots +\dim (C_k)_{g_i}), i=1, \ldots, s$,
then $d=p_1+\cdots +p_s$.

Also, for every even $t\ge 2$, let $\varphi_{t}(X^1_{g_1}, \ldots,
X^1_{g_s}, \ldots, X^{t}_{g_1}, \ldots, X^t_{g_s},Y)$ be the
polynomial constructed in the previous lemma where
$$
Y=\{y_{1,g_1},\ldots, y_{k_1,g_1}, \ldots, y_{1,g_s}, \ldots,
y_{k_s,g_s}\}.
$$

Now take any $n\ge 2d+(k_1+\cdots +k_s)$, and divide
$n-(k_1+\cdots+k_s)$ by $2d$; then we write
$$
n=2m(p_1+\cdots +p_s)+(k_1+\cdots+k_s)+t,
$$
for some $m,t$, where $0\le t < 2d$.

Set $n_1= 2mp_1+k_1+t$ and $n_i = 2mp_i+k_i,$ $2\le i\le s$. If
$\varphi=\varphi_{2m}$ is the above polynomial, then $\deg
\varphi=2m(p_1+\cdots+ p_s)+k_1+\cdots+k_s$, and set
$$
\varphi'= \varphi y_{k_1+1,g_1}\cdots y_{k_1+t,g_1}.
$$
Clearly  $\varphi' \in P_{n_1, \ldots, n_s}$, where $n_1+\cdots
+n_s=n,$ and $\varphi'$ does not vanish on $A$.

We let the group $H=S_{2mp_1}\times\cdots\times S_{2mp_s}$ act on
$P_{n_1, \ldots, n_s}$ by letting $S_{2mp_i}$ act on $X^1_{g_i}\cup
\cdots \cup X^{2m}_{g_i}$, $1\le i\le m$.   Let $M=FH\varphi'$ be
the $H$-submodule of $P_{n_1, \ldots, n_s}$ generated by $\varphi'$.
By complete reducibility $M$ splits into the sum of irreducible
$H$-modules. Hence, since $M\not\subseteq \I^G(A)$,  there exists an
irreducible submodule $M'\not\subseteq \I^G(A)$ with
$M'=W_{\langle\lambda\rangle}=FH e_{T_{\lambda(1)}}\cdots
e_{T_{\lambda(s)}}\varphi'$ where for $i=1,\ldots, s$,
$T_{\lambda(i)}$ is a Young tableau of shape $\lambda(i) \vdash
2mp_i$ and $\langle \lambda\rangle=(\lambda(1), \ldots,
\lambda(s))$.

Recall that for every $i$,  $\lambda(i)_1$ is the length of the
first row of $\lambda(i)$. Now, for every $\tau \in S_{2mp_i}$,
$\tau(\varphi')$ is still alternating on $2m$ disjoint sets of
variables of degree $g_i,$ each of these ones containing $p_i$
elements and $\sum_{\sigma \in R_{T_{\lambda(i)}}}\sigma$ acts by
symmetrizing $\lambda(i)_1$ variables. Thus, if $\lambda(i)_1> 2m,$
there would exist at least two variables on which
$$
(\sum_{\sigma \in R_{T_{\lambda(i)}}}\sigma) (\sum_{\tau \in
C_{T_{\lambda(i)}}}(\mbox{sgn}\, \tau)\tau)\varphi' =
e_{T_{\lambda(i)}}\varphi'
$$
is alternating and symmetric. This would say that
$e_{T_{\lambda(i)}}\varphi'=0$, a contradiction. Hence
$\lambda(i)_1\leq 2m$ for all $i=1, \ldots, s$.

Since $\lambda(i)\vdash 2mp_i$, this says that $\lambda(i)_1'\geq
p_i$, for all $i=1, \ldots, s$, where $\lambda(i)_1'$ is the length
of the first column of $\lambda(i)$.

Next we claim that if $r_i$ is the number of boxes out of the first
$p_i$ rows of $\lambda(i)$, then $r_1+\cdots + r_s\le l$ where
$J^{l+1}=0$.

In fact, suppose to the contrary that such number of boxes is at
least $l+1$. Since $FH e_{T_{\lambda(1)}}\cdots e_{T_{\lambda(s)}}$
is a minimal left ideal of $FH$, then
$$FH\bar{C}_{T_{\lambda(1)}}\cdots
\bar{C}_{T_{\lambda(s)}}e_{T_{\lambda(1)}}\cdots e_{T_{\lambda(s)}}=
FH e_{T_{\lambda(1)}}\cdots e_{T_{\lambda(s)}}$$ where
$\bar{C}_{T_{\lambda(i)}}=\sum_{\sigma \in
C_{T_{\lambda(i)}}}(\mathrm{sgn \sigma})\sigma, i=1, \ldots, s.$  We
need to evaluate the polynomial
$\tilde{\varphi'}=\bar{C}_{T_{\lambda(1)}}\cdots
\bar{C}_{T_{\lambda(s)}}e_{T_{\lambda(1)}}\cdots
e_{T_{\lambda(s)}}\varphi'$ on $A$.

For every $i=1, \ldots, s$, write the conjugate partition
$\lambda(i)'$ of $\lambda(i)$ as
$$
\lambda(i)'=(p_i+r_{i,1}, \ldots, p_i+r_{i,u_i},
\lambda(i)_{u_i+1}', \ldots , \lambda(i)_{m_i}').
$$
Here $\sum_{j=1}^{u_i} r_{i,j}=r_i$ and $\lambda(i)_{u_i+1}', \ldots
, \lambda(i)_{m_i}'\leq p_i.$ Now, the polynomial $\tilde{\varphi'}$
is multialternating on $u_i$ sets of variables of order
$p_i+r_{i,1}, \ldots, p_i+r_{i,u_i}$, respectively (corresponding to
the first $u_i$ columns of $\lambda(i)$). If we substitute on any of
these sets of variables only elements from the semisimple subalgebra
$C=C_1 \oplus \cdots \oplus C_k$, we would get zero since $\dim
C_{g_i}=p_i$. It follows that we have to substitute into these sets
of variables at least $$
r_{1,1}+\cdots + r_{1,u_1} +\cdots r_{s,1}
+ \ldots +r_{s,u_s} =r_1+\cdots +r_s\geq l+1
$$
elements form the Jacobson radical $J$. Since $J^{l+1}=0$, we would
get that $\tilde{\varphi'}$ vanishes on $A$. Hence $FH
e_{T_{\lambda(1)}}\cdots e_{T_{\lambda(s)}}\varphi'\subseteq
Id^G(A)$ and this is a contradiction.

We have proved that  $\lambda(1), \ldots, \lambda(s)$ must contain
at most a total number of $l$ boxes out of the first $p_1, \ldots,
p_s$ rows, respectively. Since for every $i=1,\ldots, s$,
$\lambda(i)_1\leq 2m$ the outcome is that each $\lambda(i)$ must
contain the rectangle
$$
\nu_i ={\displaystyle(\underbrace{2m-l, \ldots,
2m-l}_{p_i})=((2m-l)^{p_i})}.
$$
Now, by \cite{reg2}, as $m \rightarrow \infty$,
$$
\deg \chi_{\nu_i}\simeq a_ip_i^{(2m-l)p_i}((2m-l)p_i)^{b_i},
$$
for some constants $a_i,b_i$. Hence, since $l$ is a constant not
depending on $m$, we obtain that
$$\deg
\chi_{\nu_1}\cdots \deg \chi_{\nu_s}\simeq a((2m-l)p_1)^{b_1}\cdots
((2m-l)p_s)^{b_s}p_1^{(2m-l)p_1}\cdots p_s^{(2m-l)p_s}\geq$$$$n^u
p_1^{2mp_1}\cdots p_s^{2mp_s},$$
 for some constants $u, a$, $b_1, \ldots, b_s.$
Now recall that for all $i$, $\lambda(i)\supseteq \nu_i$. Hence,
since
$$
\dim W_{\langle\lambda\rangle}=\deg \chi_{\lambda(1)}\cdots\deg
\chi_{\lambda(s)}\geq \deg \chi_{\nu_1}\cdots \deg \chi_{\nu_s},
$$
we obtain
$$
c_{n_1, \ldots, n_s}(A)\geq \dim W_{\langle\lambda\rangle}\geq
n^u p_1^{2mp_1}\cdots p_s^{2mp_s}.
$$
Recalling the relation between the $G$-codimensions $c^G_n(A)$ and
$c_{n_1,\ldots, n_s}(A)$ given in (\ref{cG}), we get
$$
c^G_n(A)=\sum_{n_1+\cdots+n_s=n}\binom{n}{n_1, \ldots,
n_s}c_{n_1,\ldots, n_s}(A)\geq \binom{n}{n_1, \ldots, n_s}c_{n_1,
\ldots, n_s}
$$
$$
\geq n^u p_1^{2mp_1}\cdots p_s^{2mp_s}\frac{n!}{n_1!\cdots n_s!}.
$$
Clearly since $n_1=2mp_1+k_1+t$ and $n_i=2mp_i+k_i,$ $i=2, \ldots,
s$, we have that
$$
\frac{n!}{n_1!\cdots n_s!}\geq \frac{(2mp_1+\cdots+
2mp_s)!}{2mp_1!\cdots 2mp_s!}.
$$
We now invoke Stirling formula  asserting that (see \cite{robbins})
$$
n! = \sqrt{2\pi n} \left( \frac{n}{e}\right)^n
e^{\frac{\theta_n}{12n}},
$$
for some $0\le \theta_n\le 1$. We get
$$
c^G_n(A)\geq n^\alpha \frac{(2mp_1+\cdots +2mp_s)^{2mp_1+\cdots+
2mp_s}}{(2mp_1)^{2mp_1}\cdots(2mp_s)^{2mp_s}}p_1^{2mp_1}\cdots
p_s^{2mp_s}
$$
$$
=n^\alpha(p_1+\cdots +p_s)^{2mp_1+\cdots+2mp_s} =an^\alpha
(p_1+\cdots +p_s)^n=an^\alpha d^n,
$$
where $a=(p_1+\cdots+p_s)^{-(k_1+\cdots+k_s+t)}$ and $\alpha$ is a
constant. This completes the proof of the lemma.
\end{proof}

Putting together Lemma \ref{lem6} and Lemma \ref{lem19} we get

\begin{theorem} \label{mainth} Let $F$ be an algebraically closed field
of characteristic zero and $G$ a finite abelian group. If $A$ is a
finite dimensional $G$-graded algebra, then there exist constants
$C_1>0, C_2, k_1,k_2$ such that
$$
C_1n^{k_1}d^n \le c^G_n(A) \le C_2n^{k_2}d^n,
$$
where $d$ equals the dimension of some semisimple $G$-graded
subalgebra of $A$. In particular $\exp^G(A) =
\displaystyle{\lim_{n\rightarrow \infty}}\root n \of{c_n^{G}(A)}=d$
exists and is a non-negative integer.
\end{theorem}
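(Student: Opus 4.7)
The plan is to observe that the two-sided polynomial-times-$d^n$ bounds required by the theorem are, respectively, exactly the conclusions of Lemma~\ref{lem6} and Lemma~\ref{lem19}, both of which have already been established in the preceding sections. So the proof of Theorem~\ref{mainth} is essentially a bookkeeping exercise: one has only to assemble these two estimates and extract the limit.

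More concretely, I would proceed as follows. First, invoke Lemma~\ref{lem6} to obtain constants $C_2 > 0$ and $k_2$ with $c_n^G(A) \le C_2 n^{k_2} d^n$ for all $n \ge 1$; here $d = d(A)$ is the integer defined in~\eqref{d}, namely the maximum of $\dim C$ over admissible semisimple $G$-graded subalgebras $C = C_1 \oplus \cdots \oplus C_k$ of $B$ (those satisfying $C_1 J C_2 J \cdots J C_k \ne 0$). By the very definition of $d(A)$, this maximum is attained by some admissible subalgebra, so $d$ is indeed the dimension of a semisimple $G$-graded subalgebra of $A$, as claimed in the statement. Second, apply Lemma~\ref{lem19} to obtain constants $C_1 > 0$ and $k_1$ with $c_n^G(A) \ge C_1 n^{k_1} d^n$ for all sufficiently large $n$. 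This gives the two-sided estimate $C_1 n^{k_1} d^n \le c_n^G(A) \le C_2 n^{k_2} d^n$.

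From this double bound the existence and value of the exponent is immediate: taking $n$-th roots yields
\[
(C_1)^{1/n} n^{k_1/n} d \;\le\; \sqrt[n]{c_n^G(A)} \;\le\; (C_2)^{1/n} n^{k_2/n} d,
\]
and since $(C_i)^{1/n} \to 1$ and $n^{k_i/n} \to 1$ as $n \to \infty$, the squeeze theorem forces $\lim_{n \to \infty} \sqrt[n]{c_n^G(A)} = d$. In particular the limit exists and equals the non-negative integer $d = d(A)$.

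There is no real obstacle at this final step: the substantive work has already been carried out in the upper bound lemma (which rests on the representation-theoretic estimates of Remarks~\ref{rem1}--\ref{rem2} together with the hook-formula bound on $S_n$-character degrees) and in the lower bound lemma (which rests on the explicit construction of multialternating polynomials of Lemma~\ref{central} and on an asymptotic estimate of $\deg\chi_{\nu_i}$ for rectangular partitions via~\cite{reg2}). The only thing worth emphasizing explicitly in the final write-up is that the integer $d$ produced by the two bounds is the same integer, and that by construction it is the dimension of an actual $G$-graded semisimple subalgebra of $A$, so that the stated interpretation of $\exp^G(A)$ holds.
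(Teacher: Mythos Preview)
Your proposal is correct and mirrors exactly what the paper does: the theorem is stated immediately after the sentence ``Putting together Lemma~\ref{lem6} and Lemma~\ref{lem19} we get,'' so the proof is precisely the combination of those two bounds, from which the limit follows by taking $n$-th roots as you describe.
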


Recalling that by extending the ground field, the graded
codimensions do not change we get the following

\begin{corollary} \label{main}
Let $F$ be a  field of characteristic zero and $G$ a finite abelian
group. For any finite dimensional $G$-graded algebra $A$, $\exp^G(A)
= \displaystyle{\lim_{n\rightarrow \infty}}\root n \of{c_n^{G}(A)}$
exists and is a non-negative integer.
\end{corollary}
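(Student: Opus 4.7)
The plan is to reduce immediately to the case already handled by Theorem \ref{mainth}, where the base field is assumed algebraically closed. Since Theorem \ref{mainth} gives not only the existence of the limit but the sharper two-sided bound $C_1 n^{k_1} d^n \le c_n^G(A) \le C_2 n^{k_2} d^n$ with $d$ a non-negative integer (the dimension of a suitable $G$-graded semisimple subalgebra), the corollary will follow from a base-change argument.

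First I would let $\bar F$ denote an algebraic closure of $F$ and form $\bar A = A\otimes_F \bar F$, equipping it with the $G$-grading $\bar A_g = A_g\otimes_F \bar F$ for each $g\in G$. Then $\bar A$ is a finite dimensional $G$-graded algebra over the algebraically closed field $\bar F$ of characteristic zero, so Theorem \ref{mainth} applies and yields an integer $d = d(\bar A)$ with
\[
C_1 n^{k_1} d^n \le c_n^G(\bar A) \le C_2 n^{k_2} d^n
\]
for suitable constants $C_1>0$, $C_2$, $k_1$, $k_2$, and in particular $\exp^G(\bar A) = d$.

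Next I would invoke the standard invariance of graded codimensions under extension of the base field, which is the fact the authors explicitly recall in Section 3 (\cite{GZbook}): the spaces $P_n^G(A)$ and $P_n^G(\bar A)$ have the same dimension, so $c_n^G(A) = c_n^G(\bar A)$ for every $n \ge 1$. Combining this equality with the two-sided estimate above gives
\[
C_1 n^{k_1} d^n \le c_n^G(A) \le C_2 n^{k_2} d^n,
\]
and taking $n$-th roots and passing to the limit yields $\lim_{n\to\infty}\sqrt[n]{c_n^G(A)} = d$, a non-negative integer.

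There is no real obstacle here: the only thing to verify carefully is that the field extension preserves the grading and the multilinear codimension sequence, which is routine since $P_n^G$ and $\I^G(A)$ both behave well under tensoring with $\bar F$. All of the substantive work — the existence of the limit, its integrality, and its identification with $d(\bar A)$ — has already been done in Theorem \ref{mainth}.
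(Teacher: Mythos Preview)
Your proposal is correct and follows exactly the paper's own approach: the paper simply remarks that the graded codimensions do not change under extension of the base field and then states the corollary, and you have spelled out precisely this reduction to Theorem~\ref{mainth} via base change to the algebraic closure.
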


\end{document}